\newtheorem{thm}{Theorem}[section]
\newtheorem{lem}[thm]{Lemma}
\newcommand{\up}{c_{\parallel}}
\newcommand{\un}{c_{\notparallel}}
\theoremstyle{definition}
\theoremstyle{remark}
\newtheorem{rem}[thm]{Remark}
\numberwithin{equation}{section}
\renewcommand{\parallel}{\mathrel{/\mkern-5mu/}}
\newcommand{\notparallel}{%
  \mathrel{\mathpalette\not@parallel\relax}%
}
\newcommand{\not@parallel}[2]{%
  \ooalign{\reflectbox{$\m@th#1\smallsetminus$}\cr\hfil$\m@th#1\parallel$\cr}%
}
 \theoremstyle{plain}
\newcommand{\norm}[1]{\left\Vert#1\right\Vert}
\newcommand{\T}{{\mathbb T}}
\newcommand{\RR}{\mathbb{R}}
\newcommand{\NN}{\mathbb{N}}
\newcommand{\TT}{\mathbb{T}}
\newcommand{\bv}{\bm{v}}
\definecolor{Green}{rgb}{0.010,0.7,0.02}
\newcommand{\beq}{\begin{equation}}
\newcommand{\eeq}{\end{equation}}
\newcommand{\bal}{\begin{align}}
\newcommand{\eal}{\end{align}}
\newcommand{\rL}{\mathring{L}}
\begin{document}

\title[Phase separation for Cahn-Hilliard with a shear]{Phase separation for the 2D Cahn-Hilliard equation with a background shear flow}

\author[Y. Feng {\em et Al.}]{Yu Feng}%
\address{Yu Feng:  School of Sciences, Great Bay University, Dongguan, China, 523000, P.R.China }
\email{fengyu@gbu.edu.cn}

\author[]{Yuanyuan Feng}
\address{Yuanyuan Feng: School of Mathematical Sciences, Key Laboratory of MEA (Ministry of Education) and Shanghai Key Laboratory of PMMP, East China Normal University, Shanghai, 200241, P.R. China }
\email{yyfeng@math.ecnu.edu.cn}

\author[]{Anna L. Mazzucato$\,^\ast$}
\address{Anna L. Mazzucato: Department of Mathematics, Penn State University, University Park, PA, 16802, U.S.A.}
\email{alm24@psu.edu}

\author[]{Xiaoqian Xu}
\address{Xiaoqian Xu: Duke Kunshan University, Zu Chongzhi Center, No. 8 Duke Avenue, Kunshan, Jiangsu Province, 215316, P.R. China}
\email{xiaoqian.xu@dukekunshan.edu.cn}

\date{\today}

\subjclass[2020]{35K91, 76F25, 76E05}%

\keywords{Cahn-Hilliard, phase separation, shear flow, enhanced dissipation, mixing}%

\thanks{$\,^\ast$ Corresponding author.}

\begin{abstract}
We consider the Cahn-Hilliard equation, which models phase separation in binary fluids, on the two-dimen\-sional torus in the presence of advection by a given background shear flow, satisfying certain conditions and of sufficiently large amplitude. By exploiting the resulting enhanced dissipation for the linearized operator, we prove that, with well-prepared data, the solution converges asymptotically at large times to the solution of a one-dimensional Cahn-Hilliard equation, obtained by projecting the full equation in the direction orthogonal to the shear in a suitable sense. This result rigorously justified the observed phenomenon of striation in the concentration field. 
\end{abstract}


\maketitle


\mathtoolsset{showonlyrefs}


\section{Introduction and Main Result}
\subsection{The model and prior results}
This article concerns the effect of adding linear advection  by a shear flow to the Cahn-Hilliard equation on the two-dimensional torus $\TT^2\subset \RR^2$, specifically in regards to the phenomenon of phase separation at large times.


The Cahn-Hilliard equation (CHE for short) is a fourth-order semilinear parabolic equation that models phase separation in binary miscible fluids, especially binary alloys, and  exhibits pattern formation.
It takes the following form:
\begin{equation}
\label{eqn:standard_CH}
\partial_t c =\nu \Delta(c^3-c-\mu \Delta c).
\end{equation}
where $c=c(t,x)$ is a scalar function representing the concentration of each phase, $\nu>0$ is a physical parameter related to the mobility of the mixture, and $0<\sqrt{\mu}\ll 1$, the so-called  {\em Cahn number}, characterizes the thickness of the transition region between the two phases. The concentration is normalized so that regions where $c=\pm 1$ corresponds to separated phases. The mass of the system is then given by the total integral of the solution $c$, which is preserved under the CHE time evolution.

We denote a point $x$ in $\RR^2$ with $x=(x_1,x_2)$ and identify $\TT^2$ with the unit square $[0,1]^2$ in $\RR^2$, that is, we keep the period fixed. We then impose periodic conditions at  the boundary of the unit square. We also supplement \eqref{eqn:standard_CH} with an initial condition $c(0,x)=c_0(x)$.

As with other models for interface dynamics, notably the Kuramo\-to-Sivashinksy equation (KSE for short), the linearized operator $-\nu (\mu\Delta^2 +\Delta)$ in \eqref{eqn:standard_CH} is characterized by growing modes depending on the size of the periodic domain and lacks a maximum principle, which makes the analysis of the equation more challenging.  However, differently than the KSE,  the CHE has a gradient structure. This structure allows to establish global well-posedness of the CHE, while for the KSE  global existence in more than one space dimension is still essentially open. Indeed, the CHE dynamics can be understood as the gradient flow of a free energy $E$,  given by the following Ginzburg-Landau-type functional:
\begin{equation}
\label{eqn: energy functional}
    E[c]=\int_{\T^2}\frac{1}{4}(c^2-1)^2+\frac{\mu}{2}\vert\nabla c\vert^2 d x,
\end{equation}
which combines a double-well  potential energy with an interfacial energy depending on $\mu$.
In fact,  one  has that
\begin{equation*}
    \frac{d E}{dt}=-\nu\int_{\T^2}\left\vert\nabla\left(c^3-c-\mu\Delta c\right)\right\vert^2 dx<0.
\end{equation*}
The decay of the energy then gives global-in-time wellposedness under various boundary conditions for both weak and strong solutions (\cite{ES86,EF87}, see also \cite{Temam88}). A semigroup approach allows also to establish that the CHE admits a global, compact and connected attractor in Sobolev spaces (\cite{NST89}, see also \cite{CD94,CD96,Dlot93}). It is then natural to ask what is the long-time behavior of the system. Since the energy acts as a Lyapunov function, generically the system should settle onto a minimizer of the energy $E$. Formally, the minimizers correspond to equilibria with minimal interfacial thickness and maximum homogeneity. Therefore, one expects separation of phases into distinct regions and coarsening of such regions if $\mu$ is small enough, an observed phenomenon called {\em spinoidal decomposition} in alloys \cite{Cah61} (we refer also to \cite{KLCLJ16} for a survey of recent results).  
This dynamics has been investigated analytically, numerically and asymptotically \cite{ES86,EF87,Ell88,Peg89}. In the one-dimensional case, keeping the total mass fixed, if the initial concentration has both phases balanced, the system will settle in a relatively short time onto configurations of alternating homogeneous phases with a transition gap of optimal width. Such configurations are metastable and can persists for exponentially long times, but eventually nearby  intervals with the same phase merge and the final configuration consists in a number of alternating phases determined by the mass \cite{BS22,BH92,BFK18,OR07,ACRT05}.

When the CHE is used to model multiphase flow, it should be coupled to the Navier-Stokes equations, modeling viscous, incompressible flow. In a first approximation, the coupling with the fluid equations can be replaced by the action of a strong background flow, which adds a linear transport term to the CHE. We refer to this modified equation as the advective Cahn-Hilliard equation or ACHE.  When the advecting flow is mixing, the combined effect of stirring and (hyper)diffusion leads to enhanced dissipation and phase separation is suppressed provided the stirring is sufficiently strong \cite{FFIT19,NT07,OT07,OT08}. In the case of a background shear flow, the effect of advection of phase separation has been extensively studied numerically, primarily through molecular dynamics simulations, but also asymptotically, and experimentally for binary alloys \cite{OSN15,LDEBH13,Shear1,Shear2,Shear3,Shear4,Shear5,Shear6,Shear7,Shear8,Shear9,Shear10,Shear11} and in multiphase fluid flows \cite{Berth01,Bra03,SC00,HMMO95}. Even though the transient behavior can be complex, at sufficiently large times and when the strength of the shear is strong enough, the isotropic patterns typical of the two-dimensional CHE are distorted into elongated patterns along the direction of the shear and ultimately banded patterns appear, with a cross-sectional structure that resembles that for the one-dimensional CHE.

In this work, we study the long-time behavior of the ACHE under the action of an external steady shear flow, which for convenience  is assumed to be a horizontal shear. That is, the corresponding velocity field $\bv$ takes the form $\bv(x_1,x_2)=(v(x_2),0)$. We refer to $v$ as the {\em shear profile}, which we assume to be sufficiently regular. We discuss the regularity of the flow in more details later in the paper. With this choice, since the streamlines of the shear flow are horizontal lines,  the $x_1$-direction is sometimes referred to as the {\em streamwise} direction and the $x_2$-direction as the {\em spanwise} direction.

The ACHE is then given by the following equation:
\begin{equation} 
\label{eqn: sheareq}
\partial_t c+v(x_2) \partial_{x_1} c+\mu\nu\Delta^2 c=\nu \Delta(c^3-c).
\end{equation} 
In studying enhanced dissipation and mixing, it is convenient to indicate explicitly the amplitude of the velocity through a parameter $A>0$, that is, replace $\bv$ with $A \bv$. However,  a time reparameterization shows that $A$ corresponds to $\nu=1/A$ in \eqref{eqn:standard_CH}. 
Without loss of generality, we have therefore set $A=1$.
Our main result, Theorem \ref{thm:main}, validates rigorously the observed long-time dynamics in the two-dimensional
ACHE under the action of a shear flow in the following sense, at least for well-prepared data. We  specify the class of admissible initial data below (see \eqref{eqn:smallinitial}.  Informally, we show that, provided the parameter $\nu$ is small enough (equivalently, the shear strength is large enough), depending on the parameter $\mu$ and the size of the initial data, then the solution of the two-dimensional ACHE converges asymptotically at large times to the solution of a one-dimensional CHE, obtained by projecting the full equation in the direction orthogonal to the shear in a suitable sense (see \eqref{eqn: eqpar}). More precisely, we decompose the solution into two parts, one is the projection onto the kernel of the advection operator in \eqref{eqn: sheareq}, the other is the projection onto its $L^2$-orthogonal complement, and prove that the latter component decays rapidly as $t\to\infty$.

The one-dimensionalization of the long-time dynamics of the ACHE with shear flow can be established exploiting the enhanced dissipation resulting from the combined effect of (hyper)diffusion and advection. By {\em enhanced dissipation}, we mean in this context that the linear advection-(hyper)diffusion operator $-(-\Delta)^j +\bv\cdot\nabla$, $j=1,2$,
acts on characteristic timescales that are faster than those of the diffusion alone. The characteristic timescale is defined as the smallest time it takes for the solution operator to reduce the size of the solution by a fixed fraction. It is convenient to use the energy norm, i.e., the $L^2$ norm, to measure the size of the solution and to fix the fraction to be one half.

With abuse of notation, we will refer to a velocity field as a flow. Throughout, we treat only the case of incompressible flows, that is, $\bv$ is divergence free. We also assume that functions on $\TT^2$ are mean-free, as the mean is preserved under the Cahn-Hilliard evolution. This assumptions removes the constants, which are always in the kernel of the operator in \eqref{eqn: sheareq}. 

Enhanced dissipation can be measured in term of rates of decay of the operator in time and in the parameter $\nu$. Intuitively, if the vector field $\bv$ satisfies certain conditions, the advection operator $\bv\cdot\nabla$ transfers energy to small scales, where it is damped more efficiently by diffusion, as it can be seen by taking the Fourier Transform.  This phenomenon has been well studied in the mathematics literature, especially for reaction-diffusion equations and quenching in combustion (see e.g. \cite{FKR06,KZ06} and references therein). There are several examples of vector fields $\bv$ that are so-called {\em dissipation enhancing} ({\em relaxation enhancing} for the steady case). Informally, an  incompressible flow is dissipation enhancing if the dissipation time can be made arbitrarily small provided the amplitude of the flow is large enough (keeping fixed the diffusion coefficient). For steady Lipschitz-continuous flows and the case of Laplace's operator, the seminal contribution of Constantin {\em et al.} \cite{CKRZ08} gives a spectral characterization of relaxation-enhancing flows, namely, the advection operator cannot have any eigenfunctions in the $L^2$ domain of the Laplacian. Any such relaxation-enhancing flow is also dissipation enhancing for the bi-Laplacian, since the domain of latter is contained in the domain of the former. For unsteady flows, it is known that sufficiently regular mixing flows are dissipation enhancing (we refer the reader to \cite{FI19} and references therein for the definition of mixing flows and a more in-depth discussion). In the case of shear flows, there is a large kernel for the advection operator and any enhancement can only occur if the kernel is projected out. Then the enhancement depends on the shear profile, in particular on its critical points and their order. In the literature, dissipation enhancement has been established by using different tools, from hypocoercivity estimates (we mention in particular \cite{ABN22,BCZ17,CZG23}) to probabilistic methods \cite{CZD21} to resolvent estimates \cite{coti2021global,He22,Wei21} (see also \cite{FMN23} for shear flows in a circular geometry). Similar results hold, under certain conditions, for cellular flows \cite{iyer2022quantifying,F2022dissipation}.

There is a deep connection between mixing, enhanced dissipation and other phenomena occurring in fluids and in the equations that model them. We mention here only the phenomenon of {\em Taylor dispersion}, which is an enhancement of the rate of spreading of a species in the direction of the shear flow (see again \cite{CZG23} and references therein), and the stability of linearized viscous and inviscid flows around shear flows with the associated mechanism of {\em inviscid damping} (among the several recent results, we mention only \cite{Jia23,CWZ23,MZ20,BS22,GNRS20}).
Indeed, the combined effect of transport and diffusion can be either  stabilizing or  destabilizing  with consequences for the global well-posedness and the long-time behaviors of solutions to the underlying PDE.  A main example of stabilization is in aggregation models, such as the Patlak-Keller-Segel model, for which addition of advection by a sufficiently strong flow prevents blow-up of solutions irrespective of the total mass \cite{He23,HTZ22,bedrossian2017suppression}. Similarly, addition of a strong enough flow leads to global existence for the Kuramoto-Sivashinsky equation, a model of front propagation in combustion \cite{FM22,coti2021global}. The work closest to our setting is that of \cite{hu2021global}, where the authors consider an \emph{unstable} version of Cahn--Hilliard equation, characterized by the opposite sign in the cubic nonlinear term relative to \eqref{eqn:standard_CH}, thus lacking the energy structure \eqref{eqn: energy functional}.  Adding advection by a strong shear flow, they prove that, if the coefficient of the cubic term is sufficiently small and if the average of the initial data in the flow direction is likewise small, then there exists a global-in-time solution in $L^2$ even with a large initial energy. In contrast, it has been proved that without advection the equation can blow up in finite time if the initial energy is large \cite{ES86}. When using a unidirectional flow to enhance dissipation, one must assume that the linearized operator has growing modes only in the direction of the shear, as in \cite{coti2021global,hu2021global}.
A destabilizing effect of transport can be seen, for example, in another important phenomenon in fluids, namely, {\em anomalous dissipation} of energy in scalar turbulence \cite{BedrossianEtAl2022,ColomboEtAl2023,keefer2024,DrivasEtAl2022,yoneda2024,CheskidovLuo2024,armstrongVicol2025,huysmansTiti2025,elgindiLiss2024}. 

\subsection{Main result}
In order to present our main result, Theorem \ref{thm:main}, we need to introduce some preliminary concepts and some notation. We begin with defining the spatial average of any function $f\in L^2(\T^2)$: 
\begin{equation}
\langle f \rangle(t):=\int_{\T^2} f(t, x_1, x_2)dx_1 d x_2,
\end{equation}
where we used that we work with a torus of unitary size, for simplicity. Since the average is preserved under the CHE evolution, we can assume without loss of generality that $\langle c_0 \rangle=0$, which implies then $\langle c(t)\rangle\equiv 0$ for all $t>0$. For convenience, we denote the subspace of $L^2(\TT^2)$ of mean-zero functions by $\rL^2(\TT^2)$.

Then, we decompose the solution into its $\rL^2$-projection  onto the kernel of the advection operator $v(x_2) \partial_{x_1}$ and the projection onto its $\rL^2$-orthogonal complement.
To this end, we decompose a function $f\in L^\infty([0,T);\rL^2(\TT^2))$ into 
\begin{subequations}
\begin{align}
 f_{\parallel} (t,x_2)&:=\int_{\T} f(t,x_1,x_2)dx_1,    \label{eqn:decomposition_fa} \\
 f_{\notparallel}(t,x_1,x_2)&:=f(t,x_1,x_2)-f_{\parallel}(t,x_2). \label{eqn:decomposition_fb}
\end{align}
\end{subequations}
For the solution $c$, $c_{\parallel}$ informally represents the projection across the flow lines of $\bv$, which are horizontal lines, hence the notation. We stress that $f_{\parallel}$ depends only on $x_2$, while $f_{\notparallel}$ depends on both $x_1$ and $x_2$.

The shear profile is assumed not to vanish identically on an set of positive measure, otherwise no dissipation enhancement occurs, but it can have isolated zeros, which corresponds to stagnation points for the flow. Then we define the {\em order} of a critical point $x_0$ of a function $v(x)$ as the smallest integer $m\geq 2$
such that derivatives of order $m$ at $x_0$ are not all zero, that is, at most $m-1$ derivatives vanish at $x_0$. For a monotone shear profile on $\RR^2$ or a channel, for instance,  $m=1$. This definition is consistent with the classical one in Morse theory for instance, if applied to the flow map, instead of the velocity vector field. On the torus, regular shear flow cannot be monotone, hence $m\geq 2$.

Our main result is the following theorem, which implies that $c_{\parallel}(x_2)$ converges  asymptotically for large times to the solution of the one-dimensional Cahn-Hilliard equation (without advection) in the $x_2$ variable, provided the data is well prepared. The proof relies on  enhanced dissipation for the positive operator:
\begin{equation} \label{20210512eq111}
H_{\nu}:=\mu\nu \Delta^2+v(x_2) \partial_{x_1},
\end{equation} 
when $v$ has finitely many critical points of order at most $m\geq 2$. This enhancement can be characterized in terms of rates of decay for the solution operator of the associated advection-(hyper)diffusion equation, $e^{-t\,H_\nu}$, $t\geq 0$, which forms a strongly continuous semigroup in $\rL^2(\TT^2)$. We will use the following decay estimate (see \cite{BCZ17,CDE20,Wei21}):
\begin{equation} \label{20210512eq01}
\|e^{-t H_{\nu}} g_{\notparallel}\|_{L^2} \le 5\,e^{-\lambda_{\nu} t} \|g_{\notparallel}\|_{L^2}, \quad t>0,
\end{equation}
where 
\begin{equation} \label{eq:LambdanuDef}
    \lambda_{\nu}:=\delta_0 \nu^{\frac{2m}{2m+1}},
\end{equation}
with $\delta_0>0$ a constant that depends on $\mu$ and $m$, but is independent of $\nu$.

Throughout we use standard notation for spaces, e.g., $H^s(\TT^2)$, $s\in \RR$, denotes $L^2$-based Sobolev spaces, while $W^{k,\infty}(\TT^2)$, $k\in \NN$, denotes the Lipschitz space of order $k$. $C$ denotes a generic positive constant that may change line by line and, in particular, is independent of $\nu$. We also denote the $L^2$-inner product with $\left<\cdot, \cdot\right>$. We emphasize that the parameter $\mu>0$ is fixed from now on, though it can be taken to be small, as typical in applications. Thus the generic constant $C$, as well as the  parameter $\delta_0$ in the formula for the rate of enhanced dissipation $\lambda_\nu$ in \eqref{eq:LambdanuDef}, may depend on $\mu$. In some instances, for instance in \eqref{eqn:4estimates}, we explicitly write down the dependence on $\mu$ for clarity. The dependence on $\nu$ does not change in these instances.

\begin{thm}
\label{thm:main}
Let $v: \RR \rightarrow \mathbb{R}$ be a $1$-periodic function in $C^{m+1}(\TT)$ with a finite number of critical points of order at most $m\geq 2$. Let $c_0\in H^2(\T^2)\cap \rL^2(\TT^2)$, and let $c$ be the unique strong solution to \eqref{eqn: sheareq} with initial data $c_0$. There exists $0<\nu_0<1$ depending on $\|\partial_{x_2}v\|_{L^\infty}$, $\|(c_0)_{\parallel}\|_{L^2}$, $\|\partial_{x_2}(c_0)_{\parallel}\|_{L^2}$, $\|(c_0)_{\notparallel}\|_{L^2}$, $\|\partial_{x_2}(c_0)_{\notparallel}\|_{L^2}$ and $\mu$, but independent of $ \|\partial_{x_1}\left(c_0\right)_{\notparallel}\|_{L^2}$, such that, if  $\nu<\nu_0$ and $c_0$ in addition satisfies
\begin{equation}
\label{eqn:smallinitial}
    \|\partial_{x_1}\left(c_0\right)_{\notparallel}\|_{L^2}
    \leq \min{\left\{\lambda_{\nu},1\right\}},
\end{equation}
 then for any $t>0$,
\begin{equation}
\label{eqn:expon-decay}
\|c_{\notparallel}(t)\|_{L^2} \le 20e^{-\frac{\lambda_\nu t}{4}} \|c_{\notparallel}(0)\|_{L^2},   
\end{equation}
where $c_{\notparallel}$ is defined as in \eqref{eqn:decomposition_fb}.
\end{thm}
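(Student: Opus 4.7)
The plan is a continuity (bootstrap) argument combined with the enhanced dissipation estimate~\eqref{20210512eq01} for the semigroup generated by $H_\nu$. Applying the projection $(\cdot)_{\notparallel}$ to~\eqref{eqn: sheareq} and noting that $c_{\parallel}^3$ depends only on $x_2$ (so its $\notparallel$-part vanishes) gives
\begin{equation*}
\partial_t c_{\notparallel} + H_\nu c_{\notparallel} = \nu\Delta (c^3)_{\notparallel} - \nu\Delta c_{\notparallel},
\qquad (c^3)_{\notparallel} = 3 c_{\parallel}^2 c_{\notparallel} + 3 c_{\parallel}(c_{\notparallel}^2)_{\notparallel} + (c_{\notparallel}^3)_{\notparallel}.
\end{equation*}
Simultaneously, $c_{\parallel}$ satisfies a one-dimensional Cahn--Hilliard equation forced by spatial averages of $c_{\notparallel}^2$ and $c_{\notparallel}^3$. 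Monotonicity of the free energy~\eqref{eqn: energy functional} along the full flow, combined with the one-dimensional embedding $H^1\hookrightarrow L^\infty$, supplies a uniform bound $\|c_{\parallel}(t)\|_{L^\infty}\leq C_0$ depending only on the initial data and $\mu$.

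Set $\alpha := \lambda_\nu/4$ and let $T^\star$ be the maximal time on which the target bound $\|c_{\notparallel}(t)\|_{L^2}\leq 20\,e^{-\alpha t}\|c_{\notparallel}(0)\|_{L^2}$ holds, aiming to strictly improve it on $[0,T^\star)$. Pairing the equation against $c_{\notparallel}$ in $L^2$ (using that $v(x_2)\partial_{x_1}$ is skew--adjoint and that $\Delta c_{\notparallel}$ has zero $x_1$-mean, so $\langle (c^3)_{\notparallel},\Delta c_{\notparallel}\rangle = \langle c^3,\Delta c_{\notparallel}\rangle$) yields
\begin{equation*}
\tfrac{1}{2}\tfrac{d}{dt}\|c_{\notparallel}\|_{L^2}^2 + \mu\nu\|\Delta c_{\notparallel}\|_{L^2}^2 = \nu\|\nabla c_{\notparallel}\|_{L^2}^2 - 3\nu\langle c^2\nabla c,\nabla c_{\notparallel}\rangle.
\end{equation*}
The destabilizing $\nu\|\nabla c_{\notparallel}\|^2$ is absorbed into the hyperdissipation via $\|\nabla c_{\notparallel}\|_{L^2}^2\leq \|c_{\notparallel}\|_{L^2}\|\Delta c_{\notparallel}\|_{L^2}$ and Young's inequality, at the cost of an $O(\nu/\mu)\|c_{\notparallel}\|_{L^2}^2$ remainder. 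Each summand of the cubic bracket is then controlled using $\|c_{\parallel}\|_{L^\infty}\leq C_0$, Gagliardo--Nirenberg interpolation on $\TT^2$, and the Poincar\'e gain $\|c_{\notparallel}\|_{L^2}\leq \|\partial_{x_1}c_{\notparallel}\|_{L^2}$ which, together with~\eqref{eqn:smallinitial} and the bootstrap hypothesis, produces a total nonlinear contribution of the form $\tfrac{\mu\nu}{2}\|\Delta c_{\notparallel}\|_{L^2}^2 + \kappa(\nu)\|c_{\notparallel}\|_{L^2}^2$ with $\kappa(\nu)\ll\lambda_\nu$ for $\nu_0$ small.

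To convert the differential energy inequality into the sharp exponential rate $\alpha = \lambda_\nu/4$ rather than the much weaker rate $\mu\nu$ provided by the naive $H^2$-dissipation, I would reinterpret the output through the Duhamel representation
\begin{equation*}
c_{\notparallel}(t) = e^{-tH_\nu}c_{\notparallel}(0) + \int_0^t e^{-(t-s)H_\nu}\bigl[\nu\Delta(c^3)_{\notparallel}(s)-\nu\Delta c_{\notparallel}(s)\bigr]\,ds;
\end{equation*}
estimate~\eqref{20210512eq01} controls the homogeneous term by $5e^{-\lambda_\nu t}\|c_{\notparallel}(0)\|_{L^2}$, while the parabolic smoothing $\|e^{-sH_\nu}\Delta g\|_{L^2}\lesssim (\mu\nu s)^{-1/2}\|g\|_{L^2}$ together with the nonlinear bounds above allows the integrand to be estimated by $\kappa(\nu)\|c_{\notparallel}(s)\|_{L^2}$. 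Inserting the bootstrap and integrating against $e^{-\lambda_\nu(t-s)}$ upgrades the prefactor $20\,e^{-\alpha t}$ to a strictly smaller expression, closing the continuity argument and giving $T^\star=\infty$. The main obstacle is to reconcile the destabilizing low-frequency term $-\nu\Delta c_{\notparallel}$ with the enhanced dissipation rate $\lambda_\nu = \delta_0\nu^{2m/(2m+1)}$: because $\lambda_\nu\gg\nu$ as $\nu\to 0$ but the cubic nonlinearity carries two derivatives, only careful tracking of constants—using the smallness condition~\eqref{eqn:smallinitial} through the $x_1$-Poincar\'e inequality and the $L^\infty$-bound on $c_{\parallel}$—keeps the forcing subordinate to $\lambda_\nu$, and this is precisely what fixes $\nu_0$ in terms of $\mu$, $\|\partial_{x_2}v\|_{L^\infty}$, $C_0$, and the listed $H^1$-norms of the initial data.
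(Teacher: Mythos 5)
Your overall architecture (bootstrap on a maximal time interval, enhanced dissipation for $e^{-tH_\nu}$, energy estimates on the nonlinearity) matches the paper's, but three of your key steps do not hold as stated, and together they leave a genuine gap.

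First, the free energy $E[c]$ is \emph{not} monotone along the advective flow. Testing \eqref{eqn: sheareq} against $c^3-c-\mu\Delta c$ produces, besides the good term $-\nu\|\nabla(c^3-c-\mu\Delta c)\|_{L^2}^2$, the contribution $-\mu\int_{\TT^2} v'(x_2)\,\partial_{x_1}c\,\partial_{x_2}c\,dx$ coming from the interfacial part of the energy, which has no sign and no factor of $\nu$. So your uniform bound $\|c_{\parallel}\|_{L^\infty}\le C_0$ cannot be read off from energy monotonicity of the full system. The paper instead derives a Gr\"onwall inequality for $E_{\parallel}(t)=E[c_{\parallel}(t)]$ alone (Lemma \ref{lem:uniform_parallel}), in which the coupling to $c_{\notparallel}$ appears as a forcing that is integrable in time only \emph{because of} the bootstrap assumptions; see Remark \ref{rem:energy_issue}.

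Second, you bootstrap only $\|c_{\notparallel}(t)\|_{L^2}$, but your own nonlinear estimates require $\|\nabla c_{\notparallel}(t)\|_{L^2}$, and the hypothesis \eqref{eqn:smallinitial} is used in your argument only through an $x_1$-Poincar\'e inequality. That inequality bounds $\|c_{\notparallel}(t)\|_{L^2}$ by $\|\partial_{x_1}c_{\notparallel}(t)\|_{L^2}$ \emph{at time $t$}, which you have no control over without a separate bootstrap on $\phi=\partial_{x_1}c_{\notparallel}$. Moreover $\psi=\partial_{x_2}c_{\notparallel}$ satisfies an equation containing the term $(\partial_{x_2}v)\,\phi$ with \emph{no} factor of $\nu$ (from differentiating the advection), and this is precisely where \eqref{eqn:smallinitial} is actually needed: $\phi$ must be small and exponentially decaying for $\psi$ to remain merely bounded. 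The paper therefore runs a four-part bootstrap, \ref{i:bootstrap_a.1}--\ref{i:bootstrap_a.4}, on $\|c_{\notparallel}\|_{L^2}$, $\nu\int\|\Delta c_{\notparallel}\|_{L^2}^2$, $\|\phi\|_{L^2}$, and $\|\psi\|_{L^2}$ simultaneously; a single-quantity continuity argument cannot close.

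Third, your Duhamel step uses a smoothing bound $\|e^{-sH_\nu}\Delta g\|_{L^2}\lesssim(\mu\nu s)^{-1/2}\|g\|_{L^2}$ for the full advection--hyperdiffusion semigroup. This does not follow from \eqref{20210512eq01}, and it is not automatic, since $\Delta$ does not commute with $v(x_2)\partial_{x_1}$ (the commutator produces $v'\partial_{x_1}\partial_{x_2}$ and $v''\partial_{x_1}$ terms that obstruct the usual $t\|\Delta u\|^2$ energy argument). The paper sidesteps this entirely: on each window $[s,s+\tau^*]$ with $\tau^*=4/\lambda_\nu$ it splits $c_{\notparallel}=c_1+c_2$ where $c_1$ solves the homogeneous equation (so \eqref{20210512eq01} applies directly) and $c_2$ has zero data at time $s$ and is estimated by a plain energy estimate, integrating by parts to exploit the divergence form of the forcing; iterating over windows gives the rate $e^{-\lambda_\nu(t-s)/4}$. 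If you want to keep the Duhamel route you would have to prove the smoothing estimate for $e^{-tH_\nu}$ with constants uniform in $\nu$, which is an additional nontrivial lemma.
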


\begin{rem} \label{rem:example}
Several comments are in order:
  \leavevmode\newline 
\begin{enumerate}[label=(\alph*)]
\item The threshold $\nu_0$ is defined as the minimum of five quantities, $\nu_{0j}$ (with $j=1,\dots,5$), which appear in Lemmas~\ref{lem:B2}--\ref{lem:phipreB1} later in the article. Each $\nu_{0j}$ can be computed explicitly in terms of the norms of $c_0$, $v$, and their derivatives. Consequently the dependence of $\nu_0$ on these quantities is explicit. For the reader's convenience, we list in Table~\ref{chart:name_posi} below where each quantity is introduced.
\color{blue}
\begin{table}[ht]
\caption{Where the threshold values $\nu_{0,j}$ are introduced.}
\begin{center}
\begin{tabular}{ |c|c| } 
 \hline
 Constant name & Where defined \\ 
 \hline
 $\nu_{01}$ & Lemma \ref{lem:B2}  \\ 
 \hline
 $\nu_{02}$ & Lemma \ref{lem:L2control} \\ 
 \hline
 $\nu_{03}$ & Lemma \ref{lem:preB1} \\ 
 \hline
 $\nu_{04}$ & Lemma \ref{lem:phiL2control} \\ 
 \hline
 $\nu_{05}$ & Lemma \ref{lem:phipreB1} \\
 \hline
\end{tabular}
\end{center}
\label{chart:name_posi}
\end{table}
\color{black}

\medskip

\item We provide a simple example of initial data $c_0$ satisfying the  hypotheses of Theorem \ref{thm:main}. 
We let
\begin{equation*}
    c_0(x_1,x_2)=\sin{(2\pi x_2)}+\epsilon \sin{(2\pi x_1)}\sin{(2\pi x_2)},\quad 
    0<\epsilon\leq 1.
\end{equation*}
Since
\begin{align*}
 &(c_0)_{\parallel}=\sin{(2\pi x_2)},\,(c_0)_{\notparallel}=\epsilon \sin{(2\pi x_1)}\sin{(2\pi x_2)},\,\\ 
 &\partial_{x_2}(c_0)_{\parallel}=2\pi \cos{(2\pi x_2)}, \partial_{x_1}(c_0)_{\notparallel}=2\pi\epsilon\cos{(2\pi x_1)}\sin{(2\pi x_2)},\, \\
 &\partial_{x_2}(c_0)_{\notparallel}=2\pi\epsilon\sin{(2\pi x_1)}\cos{(2\pi x_2)},
\end{align*}
the norms $\|(c_0)_{\parallel}\|_{L^2}$, $\|\partial_{x_2}(c_0)_{\parallel}\|_{L^2}$, $\|(c_0)_{\notparallel}\|_{L^2}$, $\|\partial_{x_2}(c_0)_{\notparallel}\|_{L^2}$ can be computed explicitly, and so can $\nu_0$ . Hence, for $\epsilon$ small enough, the condition \eqref{eqn:smallinitial} holds and Theorem \ref{thm:main} can be applied to obtain \eqref{eqn:expon-decay}.

\medskip

\item Our results are in two space dimensions. It is natural to ask whether similar results hold in three space dimensions. In \cite{feng2022dissipation}, the authors show that a planar helical flow in $\TT^3$ can simultaneously enhance dissipation in two directions, thus reducing the dynamics of the three-dimensional advective Keller--Segel equation to an effective one-dimensional dynamics. We expect an analogous mechanism to hold for  the three-dimensional Cahn--Hilliard equation, provided that the initial data satisfies a smallness condition  on the component in the orthogonal complement of the advection operator akin to \eqref{eqn:smallinitial}. A complete analysis, however, will involve substantial technical details and is left for future work.

\item Condition \eqref{eqn:smallinitial} is not merely a technical assumption; rather it is justified, at least informally, by simulations as well. Numerical experiments show that, when the Cahn–Hilliard equation is driven by an external shear flow, solutions eventually form striped patterns, but the number of stripes depends on the shear strength (see Figure~5 in \cite{LDEBH13}). Stronger shear flows produce more stripes, meaning the flow only {\it partially stabilizes} the solution and leads to different steady states of the one-dimensional Cahn-Hilliard equation. This phenomenon suggests that shear flows can only suppress transverse variations below a certain scale, which mathematically corresponds precisely to the smallness assumption \eqref{eqn:smallinitial}  with threshold scale given by $\lambda_\nu$. Our results do not fully explain how the flow intensity relates to the final stripe count. This is an interesting question that we plan to investigate both numerically as well as analytically  in future work.
\end{enumerate}
\end{rem}

The rest of the paper is organized as follows. In Section \ref{sec:preliminary}, we recall some basic results about CHE and establish some preliminary estimates, which will be used for a bootstrap argument that is central to the proof of our main result. Section \ref{sec:Boundness of u_par} contains the proof of a key estimate, which gives a uniform bound on the derivative of the  projection $c_{\parallel}$ onto the kernel of the advection operator. By energy estimates, in turn a control on this term allows to close a Gr\"onwall estimate on the orthogonal complement $c_{\notparallel}$, which is crucial for the bootstrap argument. Indeed, in Section \ref{sec:Bootstrap estimates}, we are able to obtain bootstrap estimates under corresponding bootstrap assumptions using the results of the previous sections and conclude the proof of Theorem \ref{thm:main}. 

\subsection*{Acknowledgments}: The authors thank Michele Coti Zelati, Gautam Iyer, Yao Yao, and Zhifei Zhang for their helpful insight. Yu Feng was partially supported by the National Key R\&D Program of China, Project Number 2021YFA1001200, and the NSFC Youth program, Grant Number 12501669. Yuanyuan Feng was partially supported by National Key Research and Development Program of China (2022YFA1004401), NSFC 12301283, Shanghai Sailing program 23YF1410300, Science and Technology Commission of Shanghai Municipality (22DZ2229014), and Jiangsu Provincial Scientific Research Center of Applied Mathematics under Grant No. BK20233002. Anna Mazzucato thanks the Mathematics Department at Milan University, Milan, Italy, for their hospitality during a visit, when part of this work was conducted. She was partially supported by the US National Science Foundation under grants DMS-1615457, DMS-1909103, DMS-2206453 and  Simons Foundation Grant 1036502. Xiaoqian Xu was partially supported by National Key R\&D Program of China 2021YFA1001200, the National Science Foundation of China 12101278 and Kunshan Shuangchuang Talent Program kssc202102066.

\section{preliminary}
\label{sec:preliminary}
We devote this section to establishing some elementary estimates, which further induce the \emph{bootstrap assumptions} for \eqref{eqn: sheareq}. The key observation is that  the decay of the free energy yields a good enough control on derivatives of $c_{\notparallel}$.

We begin by recalling the well-posedness of strong solutions to the advective Cahn-Hilliard equation in \cite{FFIT19}.
\begin{thm}
\label{thm: local strong CH}
Let $\mu,\nu>0$, $v \in L^{\infty}([0,\infty);W^{2,\infty}(\T^2))$ and $c_0\in H^2(\T^2)$. There exists a unique strong solution to \eqref{eqn: sheareq} in the space
\begin{equation*}
    c(t,x)\in L^{2}_{\text{loc}}([0,\infty);H^4(\T^2))\cap L^{\infty}_{\text{loc}}([0,\infty);H^2(\T^2))\cap H^{1}_{\text{loc}}([0,\infty);L^2(\T^2)).
\end{equation*}
\end{thm}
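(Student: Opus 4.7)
The plan is to follow the classical Faedo-Galerkin scheme coupled with a priori estimates derived from the free-energy dissipation, essentially along the lines of \cite{FFIT19,ES86}. The key observation is that even in the presence of the transport term $v(x_2)\pa_{x_1} c$, the free energy $E[c]$ in \eqref{eqn: energy functional} remains a Lyapunov functional up to lower-order corrections controlled by $\|v\|_{W^{1,\infty}}$, because $\bv$ is divergence-free and depends on $x_2$ only. This allows the standard CHE analysis to go through with minor modifications.

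\textbf{Step 1 (Galerkin approximation).} I would project \eqref{eqn: sheareq} onto the finite-dimensional subspace $V_N\subset \rL^2(\T^2)$ spanned by the first $N$ Fourier modes (equivalently the eigenfunctions of $\Delta^2$), obtaining an ODE system for the Fourier coefficients of $c^N(t)$ whose local-in-time solvability follows from Picard-Lindel\"of. Since $V_N$ is finite dimensional, $c^N\in C^1([0,T_N);V_N)$ solves the Galerkin-truncated equation.

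\textbf{Step 2 (Energy/$H^1$ estimate).} Test against the chemical potential $\mu_c^N:=(c^N)^3-c^N-\mu\Delta c^N$. The terms $\int v\pa_{x_1}c^N\cdot (c^N)^3\,dx$ and $\int v\pa_{x_1}c^N\cdot c^N\,dx$ vanish by integration by parts in $x_1$, and the biharmonic contribution from $-\mu\Delta c^N$ reduces, after integrating by parts in $x_2$, to $\mu\int v'(x_2)\pa_{x_1}c^N\pa_{x_2}c^N\,dx$, which is bounded by $C\|v\|_{W^{1,\infty}}\|\nabla c^N\|_{L^2}^2$. This yields
\begin{equation*}
\frac{d}{dt}E[c^N]+\nu\|\nabla\mu_c^N\|_{L^2}^2\le C\|v\|_{W^{1,\infty}}\|\nabla c^N\|_{L^2}^2,
\end{equation*}
and Gr\"onwall delivers uniform bounds of $c^N$ in $L^\infty_{\mathrm{loc}}([0,\infty);H^1)\cap L^2_{\mathrm{loc}}([0,\infty);H^3)$.

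\textbf{Step 3 ($H^2$/$H^4$ estimate).} To obtain the regularity claimed in the statement, I would next test the Galerkin equation against $\Delta^2 c^N$. The nonlinearity $\nu\Delta(c^3)=3\nu c^2\Delta c+6\nu c|\nabla c|^2$ is handled using the 2D Sobolev embedding $H^2(\T^2)\hookrightarrow L^\infty(\T^2)$ together with interpolation, so that these terms are absorbed into $\mu\nu\|\Delta^2 c^N\|_{L^2}^2$. The transport contribution $\int v\pa_{x_1}c^N\cdot \Delta^2 c^N\,dx$ is integrated by parts twice and bounded via $\|v\|_{W^{2,\infty}}$ and the $H^3$ control from Step 2. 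This produces uniform estimates in $L^\infty_{\mathrm{loc}}([0,\infty);H^2)\cap L^2_{\mathrm{loc}}([0,\infty);H^4)$, and the equation itself then gives $\pa_t c^N\in L^2_{\mathrm{loc}}([0,\infty);L^2)$, hence $c^N\in H^1_{\mathrm{loc}}([0,\infty);L^2)$.

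\textbf{Step 4 (Passage to the limit and uniqueness).} By the Aubin-Lions compactness lemma, a diagonal subsequence $c^{N_k}$ converges strongly in $C([0,T];H^{2-\eta})$ and weakly-$\ast$ in $L^\infty([0,T];H^2)$, weakly in $L^2([0,T];H^4)$, which is sufficient regularity to pass to the limit in the cubic nonlinearity and in the transport term; the limit is a strong solution in the stated class. For uniqueness, I would write the equation satisfied by $w:=c_1-c_2$, test against $(-\Delta)^{-1}w$ (well-defined on mean-zero functions), exploit the divergence-free property of $\bv$ to cancel the transport contribution, and use $c_1^3-c_2^3=(c_1^2+c_1c_2+c_2^2)w$ together with the $L^\infty_{\mathrm{loc}}H^2$ bound on $c_1,c_2$ to close a Gr\"onwall inequality on $\|w\|_{H^{-1}}^2$.

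\textbf{Main obstacle.} The delicate point is not the transport term, which is essentially harmless thanks to the divergence-free structure and the shear geometry, but rather controlling the supercritical-looking nonlinearity $\Delta(c^3)$ uniformly in $N$ at the $H^2$ level. In two space dimensions this is exactly borderline: one needs to carefully exploit $H^2\hookrightarrow L^\infty$ and an interpolation inequality between $\|c\|_{H^2}$ and $\|c\|_{H^4}$ so that the nonlinear contribution to the $H^2$ estimate can be absorbed into the hyperdiffusive dissipation, rather than only bounded by it. Once this absorption is achieved, the a priori bounds are independent of the time interval and global existence follows at no extra cost.
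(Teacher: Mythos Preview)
The paper does not give its own proof of this statement: Theorem~\ref{thm: local strong CH} is simply \emph{quoted} from \cite{FFIT19} as background for the subsequent analysis. Your Faedo--Galerkin outline is the standard route for advective Cahn--Hilliard equations and is, in all likelihood, exactly the argument carried out in that reference; there is nothing in the present paper to compare it against.

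Two small technical points worth tightening. First, in Step~2 you test the Galerkin system against $\mu_c^N=(c^N)^3-c^N-\mu\Delta c^N$, but $(c^N)^3\notin V_N$, so the transport cancellations $\int v\,\partial_{x_1}c^N\,(c^N)^3\,dx=0$ are not literally available at the truncated level; the clean fix is to test first against $c^N$ and then against $-\Delta c^N$ (both in $V_N$), which yields the same $L^\infty_tH^1\cap L^2_tH^3$ control without this issue. Second, in Step~3 absorption into $\mu\nu\|\Delta^2 c^N\|_{L^2}^2$ alone does not close the estimate: after Young's inequality the residual term is of order $\|\Delta c^N\|_{L^2}^4$ (or higher), which would give a superlinear ODE. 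The correct mechanism is Gr\"onwall with coefficient $\|\Delta c^N\|_{L^2}^4$, which lies in $L^1_{\mathrm{loc}}$ precisely because of the $L^2_tH^3$ bound from Step~2 (interpolate $\|\Delta c^N\|_{L^2}^2\le C\|\nabla c^N\|_{L^2}\|\nabla\Delta c^N\|_{L^2}$). With these adjustments your sketch goes through.
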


Throughout, then $c$ denotes the unique strong solution  to \eqref{eqn: sheareq} with $c_0\in H^2(\T^2)\cap \rL^2(\TT^2)$. 
Given the regularity of $c$, both $c$ and its derivatives enjoy an energy identity.
As mentioned in the Introduction, we decompose $c$ into $c_{\parallel}$ and $c_{\notparallel}$ as in \eqref{eqn:decomposition_fa} and \eqref{eqn:decomposition_fb}, respectively. Thus, $c_{\parallel}$ and $c_{\notparallel}$ satisfy the following system of coupled equations: 
\begin{subequations} \label{eq:coupled}
\begin{equation} 
\label{eqn: eqpar}
\partial_t c_{\parallel} + \mu\nu \partial_{x_2}^4 c_{\parallel}=-\nu\partial_{x_2}^2 c_{\parallel} + \nu\int_{\T}\partial_{x_2}^2(c_{\parallel}+c_{\notparallel})^3 dx_1,
\end{equation}
\begin{align}
& \partial_t c_{\notparallel}+v(x_2)\partial_{x_1} c_{\notparallel}+\mu\nu \Delta^2 c_{\notparallel} = \label{eqnotpar} \\
-\nu\Delta c_{\notparallel}+\nu\Delta &\left(3 c_{\parallel}^2 c_{\notparallel}+3c_{\parallel} c_{\notparallel}^2 +c_{\notparallel}^3\right)-\nu\int_{\T}\partial_{x_2}^2\left(3c_{\parallel} c_{\notparallel}^2+c_{\notparallel}^3\right) dx_1, \nonumber
\end{align} 
\end{subequations}
where we used that $c_{\parallel}$ is independent on $x_1$ and that $\langle c_{\notparallel}\rangle_{x_1}=0$ to obtain \eqref{eqnotpar}. 
We remark that, if  $c_{\notparallel}$ vanishes, then  \eqref{eqn: eqpar} reduces to the one-dimensional Cahn-Hilliard equation in the spanwise direction.

We tackle \eqref{eqnotpar} first, deriving a standard energy estimate:
\begin{align}
&\frac{d}{dt}\|c_{\notparallel}\|_{L^2}^2+2\mu\nu\|\Delta c_{\notparallel}\|_{L^2}^2 \nonumber \\
&=2\nu\|\nabla c_{\notparallel}\|_{L^2}^2+2\nu\left<\Delta\left(3 c_{\parallel}^2 c_{\notparallel}+3c_{\parallel} c_{\notparallel}^2+c_{\notparallel}^3\right), c_{\notparallel}\right> \nonumber \\
&\quad -2\nu\left<\partial_{x_2}^2\int_{\T}\left(3c_{\parallel} c_{\notparallel}^2+c_{\notparallel}^3\right)  dx_1, c_{\notparallel}\right> \nonumber \\
&=2\nu\|\nabla c_{\notparallel}\|_{L^2}^2-6\nu\|c_{\notparallel}\nabla c_{\notparallel}\|_{L^2}^2+2\nu\left<\Delta\left(3 c_{\parallel}^2 c_{\notparallel}+3c_{\parallel} c_{\notparallel}^2\right), c_{\notparallel}\right> \nonumber  \\
&\quad -2\nu\left<\partial_{x_2}^2\int_{\T}\left(3c_{\parallel} c_{\notparallel}^2+c_{\notparallel}^3\right)  dx_1, c_{\notparallel}\right>\nonumber \\
&\leq 2\nu\Bigg(\|\nabla c_{\notparallel}\|^2_{L^2}+\left\vert\left<\left(3 c_{\parallel}^2 c_{\notparallel}+3c_{\parallel} c_{\notparallel}^2\right),\Delta c_{\notparallel}\right>\right\vert \nonumber  \\
&\quad\qquad+\left\vert\left<\int_{\T}\left(3c_{\parallel} c_{\notparallel}^2+c_{\notparallel}^3 \right) dx_1,\partial_{x_2}^2 c_{\notparallel}\right>\right\vert\Bigg). \label{eqn:L2first_estimate}
\end{align}
Integrating by parts and applying H\"older's inequality, we can bound each term on the right-hand side as follows:
\begin{align}
&\|\nabla c_{\notparallel}\|^2_{L^2}\leq \|\Delta c_{\notparallel}\|_{L^2}\|c_{\notparallel}\|_{L^2}, \nonumber \\
&\left\vert\left<c_{\parallel}^2 c_{\notparallel},\Delta c_{\notparallel}\right>\right\vert 
\leq \|c_{\parallel}\|_{L^\infty}^2\|c_{\notparallel}\|_{L^2}\|\Delta c_{\notparallel}\|_{L^2},  \label{eqn:1d-GNieq-H1}\\
&\left\vert\left<c_{\parallel} c_{\notparallel}^2,\Delta c_{\notparallel}\right>\right\vert 
\leq \|c_{\parallel}\|_{L^\infty}\|c_{\notparallel}\|_{L^4}^2\|\Delta c_{\notparallel}\|_{L^2}, \nonumber \\
&\left\vert\left<c_{\notparallel}^3,\Delta c_{\notparallel}\right>\right\vert \leq \|c_{\notparallel}\|_{L^6}^3\|\Delta c_{\notparallel}\|_{L^2}. \nonumber 
\end{align}
We now recall well-known Gagliardo-Nirenberg-Moser estimates (see e.g. \cite[Chapter 13]{TaylorPDEIII2023} and references therein).  In particular, in one dimension, using also Poincar\'e's inequality for mean-free functions, we have that 
\begin{equation} \label{1dGNineqn}
 \|f\|_{L^\infty(\TT)}\leq C\|\nabla f\|_{L^2(\TT)}^{1/2}\|f\|_{L^2(\TT)}^{1/2}\leq C\|\nabla f\|_{L^2(\TT)},
\end{equation}
while in dimension two one has that
\begin{align}
\|f\|_{L^4(\TT^2)}&\leq C\|\nabla f\|_{L^2(\TT^2)}^{1/2}\|f\|_{L^2(\TT^2)}^{1/2},\nonumber \\
 \|f\|_{L^4(\TT^2)}&\leq C\|\Delta f\|_{L^2(\TT^2)}^{1/4}\|f\|_{L^2(\TT^2)}^{3/4}, \label{eqn:three_GN_ineq} \\
 \|f\|_{L^6(\TT^2)}&\leq C\|\nabla f\|_{L^2(\TT^2)}^{2/3}\|f\|_{L^2(\TT^2)}^{1/3}. \nonumber
\end{align}
Using the inequalities above and Young's inequality in \eqref{eqn:1d-GNieq-H1} then gives:
\begin{align}
&\|\nabla c_{\notparallel}\|^2_{L^2}\leq \frac{1}{8}\mu\|\Delta c_{\notparallel}\|_{L^2}^2 + \frac{C}{\mu}\|c_{\notparallel}\|_{L^2}^2, \nonumber \\
&\left\vert\left<c_{\parallel}^2 c_{\notparallel},\Delta c_{\notparallel}\right>\right\vert 
\leq C\|\partial_{x_2} c_{\parallel}\|_{L^2}^2\|c_{\notparallel}\|_{L^2}\|\Delta c_{\notparallel}\|_{L^2} \nonumber   \\\nonumber
&\qquad\qquad\qquad\quad\leq \frac{1}{24}\mu\|\Delta c_{\notparallel}\|_{L^2}^2+\frac{C}{\mu}\|\partial_{x_2} c_{\parallel}\|_{L^2}^4\|c_{\notparallel}\|_{L^2}^{2},\\ \nonumber 
&\left\vert\left<c_{\parallel} c_{\notparallel}^2,\Delta c_{\notparallel}\right>\right\vert 
\leq C\|\partial_{x_2} c_{\parallel}\|_{L^2}\|c_{\notparallel}\|_{L^2}^{3/2}\|\Delta c_{\notparallel}\|_{L^2}^{3/2}\\\label{eqn:4estimates}
&\qquad\qquad\qquad\quad\leq \frac{1}{24}\mu\|\Delta c_{\notparallel}\|_{L^2}^2 + \frac{C}{\mu} \|\partial_{x_2} c_{\parallel}\|_{L^2}^4\|c_{\notparallel}\|_{L^2}^{6},\\ \nonumber
&\left\vert\left<c_{\notparallel}^3,\Delta c_{\notparallel}\right>\right\vert \leq C \|c_{\notparallel}\|_{L^2}\|\nabla c_{\notparallel}\|_{L^2}^2\|\Delta c_{\notparallel}\|_{L^2}\\\nonumber
&\qquad\qquad\qquad\quad\leq \frac{1}{8}\mu\|\Delta c_{\notparallel}\|_{L^2}^2+\frac{C}{\mu}\|\nabla c_{\notparallel}\|_{L^2}^4\|c_{\notparallel}\|_{L^2}^2. \nonumber
\end{align}
Consequently,  \eqref{eqn:L2first_estimate} becomes
\begin{align}
\label{eqn: energy_estimateI}
    &\frac{d}{dt}\| c_{\notparallel}\|_{L^2}^2 + \mu\nu\|\Delta c_{\notparallel}\|_{L^2}^2 \leq 
    \\\nonumber  
    C\nu\big( \| c_{\notparallel}\|_{L^2}^2 &+ \|\partial_{x_2} c_{\parallel}\|_{L^2}^4\|c_{\notparallel}\|_{L^2}^{2} + \|\partial_{x_2} c_{\parallel}\|_{L^2}^4 \|c_{\notparallel}\|_{L^2}^6 + \|\nabla c_{\notparallel}\|_{L^2}^4\|c_{\notparallel}\|_{L^2}^2\big),
\end{align}
from which it follows that, for $0\leq s\leq t,$
\begin{align}
\label{eqn: energy_estimateII}
    &\mu\nu\int_s^t\|\Delta c_{\notparallel}(\tau)\|_{L^2}^2 d\tau
    \leq \| c_{\notparallel}(s)\|_{L^2}^2 + C \nu  \int_s^t \| c_{\notparallel}\|_{L^2}^2\, d\tau
    \\\nonumber
 + C\nu \int^t_s \big(\|\partial_{x_2} &c_{\parallel}\|_{L^2}^4  \|c_{\notparallel}\|_{L^2}^{2}+\|\partial_{x_2} c_{\parallel}\|_{L^2}^4 \|c_{\notparallel}\|_{L^2}^6 + \|\nabla c_{\notparallel}\|_{L^2}^4\|c_{\notparallel}\|_{L^2}^2\big)\, d\tau,
\end{align}
where we have neglected the positive term $\|c_{\notparallel}(t)\|_{L^2}$ on the left-hand side.

Next, we estimate $\|\partial_{x_1}c_{\notparallel}\|_{L^2}$ and $\|\partial_{x_2}c_{\notparallel}\|_{L^2}$ separately. For simplicity, we denote $\phi:=\partial_{x_1}c_{\notparallel}$ and $\psi:=\partial_{x_2}c_{\notparallel}$. Differentiating both sides of  \eqref{eqnotpar} with respect to $x_1$ gives the equation for $\phi$:
\begin{align}
    &\partial_t\phi+v(x_2)\partial_{x_1}\phi+\mu\nu\Delta^2\phi\\
    &\qquad\qquad\qquad=-\nu\Delta\phi +\nu\Delta\left(\partial_{x_1}\left(3 c_{\parallel}^2 c_{\notparallel}+3 c_{\parallel}c_{\notparallel}^2+c_{\notparallel}^3\right)\right), \label{eqn:phieqn}
\end{align}
from which we obtain the energy identity for $\phi$ in a standard way, multiplying  both sides of the equation by $2\phi$ and integrating by parts: 
\begin{align*}
\frac{d}{dt}\|\phi\|_{L^2}^2+2\mu\nu\|\Delta\phi\|_{L^2}^2
&=2\nu\left<-\phi+\partial_{x_1}\left(3 c_{\parallel}^2 c_{\notparallel}+3 c_{\parallel}c_{\notparallel}^2+c_{\notparallel}^3\right), \Delta\phi\right>\\
&=2\nu\left<-\phi+3 c_{\parallel}^2 \phi+6 c_{\parallel}c_{\notparallel}\phi+ 3c_{\notparallel}^2\phi, \Delta\phi\right>.
\end{align*}
Now, we can readily estimate some of the terms on the right as follows:
\begin{align*}
&\vert\left<-\phi,\Delta\phi\right>\vert\leq\frac{\mu}{8}\|\Delta\phi\|_{L^2}^2+\frac{C}{\mu}\|\phi\|_{L^2}^2,\\
&\left\vert\left<c_{\parallel}^2 \phi,\Delta\phi\right>\right\vert
\leq\frac{\mu}{24}\|\Delta\phi\|_{L^2}^2+\frac{C}{\mu}\Vert c_{\parallel}^2\phi\Vert_{L^2}^2\leq\frac{\mu}{24}\|\Delta\phi\|_{L^2}^2+C\Vert c_{\parallel}\Vert_{L^{\infty}}^4\Vert\phi\Vert_{L^2}^2\\\nonumber
&\qquad\qquad\qquad\leq\frac{\mu}{24}\|\Delta\phi\|_{L^2}^2+C\Vert \partial_{x_2 }c_{\parallel}\Vert_{L^2}^4\Vert\phi\Vert_{L^2}^2,
\end{align*}
using \eqref{1dGNineqn} on  $\Vert c_{\parallel}\Vert_{L^{\infty}}$.
To bound the trilinear term, in addition to \eqref{1dGNineqn}, we employ the following standard Gagliardo-Nirenberg-Moser estimate for mean-zero-functions in dimension two (see e.g. \cite{TaylorPDEIII2023} again) on  $\Vert c_{\notparallel}\Vert_{L^{\infty}}$:
\begin{align}
     \|f\|_{L^\infty(\TT^2)}&\leq C\|\Delta f\|_{L^2(\TT^2)}^{1/2}\|f\|_{L^2(\TT^2)}^{1/2}; \label{eq:2DGagliardo} \\
    \|f\|_{L^\infty(\TT^2)}&\leq C\|\Delta f\|_{L^2(\TT^2)}^{1/3}\|f\|_{L^4(\TT^2)}^{2/3}\leq C\|\Delta f\|_{L^2(\TT^2)}^{1/3}\|\nabla f\|_{L^2(\TT^2)}^{1/3}\|f\|_{L^2(\TT^2)}^{1/3}. \nonumber
\end{align}     
It follows that
\begin{align*}
&\left\vert\left< c_{\parallel}c_{\notparallel}\phi,\Delta\phi\right>\right\vert
\leq\frac{\mu}{48}\|\Delta\phi\|_{L^2}^2+\frac{C}{\mu}\Vert c_{\parallel}c_{\notparallel}\phi\Vert_{L^2}^2\\\nonumber
&\qquad\qquad\qquad\quad\leq \frac{\mu}{48}\|\Delta\phi\|_{L^2}^2+C\Vert c_{\parallel}\Vert_{L^{\infty}}^2 \Vert c_{\notparallel}\Vert_{L^{\infty}}^2 \Vert\phi\Vert_{L^2}^2\\\nonumber
&\qquad\qquad\qquad\quad\leq  \frac{\mu}{48}\|\Delta\phi\|_{L^2}^2+C\Vert \partial_{x_2} c_{\parallel}\Vert_{L^2}^2\Vert\Delta c_{\notparallel}\Vert_{L^2}\Vert c_{\notparallel}\Vert_{L^2}\Vert\phi\Vert_{L^2}^2,\\
&\left\vert\left<c_{\notparallel}^2\phi, \Delta\phi\right>\right\vert
\leq\frac{\mu}{24}\|\Delta\phi\|_{L^2}^2+\frac{C}{\mu}\Vert c_{\notparallel}^2\phi\Vert_{L^2}^2\leq\frac{\mu}{24}\|\Delta\phi\|_{L^2}^2+C\Vert c_{\notparallel}\Vert_{L^{\infty}}^4\Vert\phi\Vert_{L^2}^2\\
&\qquad\qquad\qquad\leq\frac{\mu}{24}\|\Delta\phi\|_{L^2}^2+C\Vert \Delta c_{\notparallel}\Vert_{L^2}^{4/3}\Vert \nabla c_{\notparallel}\Vert_{L^2}^{4/3}\Vert c_{\notparallel}\Vert_{L^2}^{4/3}\Vert\phi\Vert_{L^2}^2.
\end{align*}
By combining the estimates above, we finally have that
\begin{align}
\label{eqn:phiL2}
\frac{d}{dt}\|\phi\|_{L^2}^2
&\leq C\nu\Big(1+\Vert \partial_{x_2} c_{\parallel}\Vert_{L^2}^4+\Vert \partial_{x_2} c_{\parallel}\Vert_{L^2}^2\Vert\Delta c_{\notparallel}\Vert_{L^2}\Vert c_{\notparallel}\Vert_{L^2}\\\nonumber
&\quad +\Vert \Delta c_{\notparallel}\Vert_{L^2}^{4/3}\Vert\nabla c_{\notparallel}\Vert_{L^2}^{4/3}\Vert c_{\notparallel}\Vert_{L^2}^{4/3}\Big)\Vert\phi\Vert_{L^2}^2.
\end{align}
Next, we turn to  $\psi$. Differentiating equation \eqref{eqnotpar} with respect to $x_2$ gives:
\begin{align}
&\partial_t\psi+v(x_2)\partial_{x_1}\psi+\left(\partial_{x_2}v\right)\phi+\mu\nu\Delta^2\psi= \nonumber \\\nonumber
&\quad-\nu\Delta\psi+\nu\Delta\left(\partial_{x_2}\left(3 c_{\parallel}^2 c_{\notparallel}+3 c_{\parallel}c_{\notparallel}^2+c_{\notparallel}^3\right)\right)-\nu\partial_{x_2}^3\left(\int_{\TT}\left(3c_{\parallel}c_{\notparallel}^2+c_{\notparallel}^3\right)d x_1\right).
\end{align}
Again, a standard integration by parts gives the energy identity for $\psi$:
\begin{align}
&\frac{d}{dt}\|\psi\|_{L^2}^2+2\mu\nu\|\Delta\psi\|_{L^2}^2 \nonumber \\\nonumber
&=-2\left<\left(\partial_{x_2}v\right)\phi,\psi\right>-2\nu\left<\psi,\Delta\psi\right>+6\nu\left<c_{\parallel}^2\psi,\Delta\psi\right>+12\nu\left<c_{\parallel}c_{\notparallel}\partial_{x_2}c_{\parallel},\Delta\psi\right>\\\nonumber
&\quad+6\nu\left<c_{\notparallel}^2\partial_{x_2}c_{\parallel},\Delta\psi\right>+12\nu\left<c_{\parallel}c_{\notparallel}\psi,\Delta\psi\right>+6\nu\left<c_{\notparallel}^2\psi,\Delta\psi\right>\\\nonumber
&\quad-6\nu\left<\int_{\TT}\left(c_{\notparallel}^2\partial_{x_2}c_{\parallel}\right)dx_1,\partial_{x_2}^2\psi\right>
-12\nu\left<\int_{\TT}\left(c_{\parallel}c_{\notparallel}\psi\right)dx_1,\partial_{x_2}^2\psi\right>\\\nonumber
&\quad-6\nu\left<\int_{\TT}\left(c_{\notparallel}^2\psi\right)dx_1,\partial_{x_2}^2\psi\right>
\end{align}
Differently than in the energy identity for $\phi$, in the identity above the first term on the right-hand side does not contain the small parameter $\nu$. However, it only depends on $\phi$ and we will show that $\phi$ decays exponentially fast. Thus, under the smallness assumption \eqref{eqn:smallinitial}, one can obtain a uniform bound for $\Vert\psi\Vert_{L^2}$. 
To prove this result, we estimate each term on the right-hand side of the energy identity for $\psi$. Again, we will use \eqref{1dGNineqn} for $\Vert c_{\parallel}\Vert_{L^{\infty}}$, and \eqref{eq:2DGagliardo} 
for $\Vert c_{\notparallel}\Vert_{L^{\infty}}$. The estimates are tedious, but standard, giving
\begin{align*}
|\left<\left(\partial_{x_2}v\right)\phi,\psi\right>| &\leq\Vert\partial_{x_2}v\Vert_{L^{\infty}}\Vert\phi\Vert_{L^2}\Vert\psi\Vert_{L^2},
\end{align*}
\begin{align*}
|\left<\psi,\Delta\psi\right>| &\leq\frac{\mu}{18}\|\Delta\psi\|_{L^2}^2+\frac{C}{\mu}\Vert\psi \Vert_{L^2}^2\leq \frac{\mu}{18}\|\Delta\psi\|_{L^2}^2+C\Vert\nabla c_{\notparallel} \Vert_{L^2}^2\\
&\leq \frac{\mu}{18}\|\Delta\psi\|_{L^2}^2+C\Vert\Delta c_{\notparallel} \Vert_{L^2}^2,
\end{align*}
\begin{align*}
\left\vert \left<c_{\parallel}^2\psi,\Delta\psi\right>\right\vert 
&\leq \frac{\mu}{54}\Vert \Delta\psi\Vert_{L^2}^2+\frac{C}{\mu}\Vert c_{\parallel}^2\psi \Vert_{L^2}^2 \\
&\leq \frac{\mu}{54}\Vert \Delta\psi\Vert_{L^2}^2+C\Vert c_{\parallel}\Vert_{L^{\infty}}^4\Vert\psi \Vert_{L^2}^2\\\nonumber
&\leq\frac{\mu}{54}\Vert \Delta\psi\Vert_{L^2}^2+C\Vert c_{\parallel}\Vert_{L^{\infty}}^4\Vert\nabla c_{\notparallel} \Vert_{L^2}^2\\
&\leq\frac{\mu}{54}\Vert \Delta\psi\Vert_{L^2}^2+C\Vert \partial_{x_2}c_{\parallel}\Vert_{L^2}^4\Vert\Delta c_{\notparallel} \Vert_{L^2}\Vert c_{\notparallel} \Vert_{L^2},
\end{align*}
\begin{align*}
\left\vert\left<c_{\parallel}c_{\notparallel}\partial_{x_2}c_{\parallel},\Delta\psi\right>\right\vert
&\leq\frac{\mu}{108}\Vert \Delta\psi\Vert_{L^2}^2+\frac{C}{\mu}\Vert c_{\parallel}c_{\notparallel}\partial_{x_2}c_{\parallel}\Vert_{L^2}^2\\
&\leq\frac{\mu}{108}\Vert \Delta\psi\Vert_{L^2}^2+C\Vert c_{\parallel}\Vert_{L^\infty}^2\Vert c_{\notparallel}\Vert_{L^\infty}^2 \Vert\partial_{x_2} c_{\parallel}\Vert_{L^2}^2\\\nonumber
&\leq\frac{\mu}{108}\Vert \Delta\psi\Vert_{L^2}^2+C\Vert\partial_{x_2} c_{\parallel}\Vert_{L^2}^4\Vert \Delta c_{\notparallel}\Vert_{L^2}\Vert c_{\notparallel}\Vert_{L^2},
\end{align*}
\begin{align*}
\left\vert\left<c_{\notparallel}^2\partial_{x_2}c_{\parallel},\Delta\psi\right>\right\vert 
&\leq\frac{\mu}{54}\Vert \Delta\psi\Vert_{L^2}^2+\frac{C}{\mu}\Vert c_{\notparallel}^2\partial_{x_2}c_{\parallel}\Vert_{L^2}^2\\
&\leq\frac{\mu}{54}\Vert \Delta\psi\Vert_{L^2}^2+C\Vert\partial_{x_2} c_{\parallel}\Vert_{L^2}^2\Vert \Delta c_{\notparallel}\Vert_{L^2}^{4/3}\Vert \nabla c_{\notparallel}\Vert_{L^2}^{4/3}\Vert c_{\notparallel}\Vert_{L^2}^{4/3}\\
&\leq\frac{\mu}{54}\Vert \Delta\psi\Vert_{L^2}^2+C\left(\Vert\partial_{x_2} c_{\parallel}\Vert_{L^2}^2\Vert \Delta c_{\notparallel}\Vert_{L^2}^{2/3}\Vert \nabla c_{\notparallel}\Vert_{L^2}^{2/3}\Vert c_{\notparallel}\Vert_{L^2}^{2/3}\right)^{3/2}\\
&\quad+C\left(\Vert \Delta c_{\notparallel}\Vert_{L^2}^{2/3}\Vert \nabla c_{\notparallel}\Vert_{L^2}^{2/3}\Vert c_{\notparallel}\Vert_{L^2}^{2/3}\right)^{3}\\
&\leq\frac{\mu}{54}\Vert \Delta\psi\Vert_{L^2}^2+C\Vert\partial_{x_2} c_{\parallel}\Vert_{L^2}^3\Vert \Delta c_{\notparallel}\Vert_{L^2}\left(\Vert\phi\Vert_{L^2}+\Vert\psi\Vert_{L^2}\right)\Vert c_{\notparallel}\Vert_{L^2}\\
&\quad+C\Vert \Delta c_{\notparallel}\Vert_{L^2}^{2}\left(\Vert\phi\Vert_{L^2}^{2}+\Vert\psi\Vert_{L^2}^{2}\right)\Vert c_{\notparallel}\Vert_{L^2}^2,
\end{align*}
\begin{align*}
\left\vert\left<c_{\parallel}c_{\notparallel}\psi,\Delta\psi\right>\right\vert 
&\leq\frac{\mu}{108}\Vert \Delta\psi\Vert_{L^2}^2+\frac{C}{\mu}\Vert c_{\parallel}c_{\notparallel}\psi\Vert_{L^2}^2\\\nonumber
& \leq\frac{\mu}{108}\Vert \Delta\psi\Vert_{L^2}^2+C\Vert c_{\parallel}\Vert_{L^{\infty}}^2\Vert\psi\Vert_{L^2}^2\Vert c_{\notparallel}\Vert_{L^{\infty}}^2\\
&\leq\frac{\mu}{108}\Vert \Delta\psi\Vert_{L^2}^2+C\Vert\partial_{x_2} c_{\parallel}\Vert_{L^2}^2\Vert\psi\Vert_{L^2}^2\Vert \Delta c_{\notparallel}\Vert_{L^2}\Vert c_{\notparallel}\Vert_{L^2},
\end{align*}
\begin{align*}
\left\vert\left<c_{\notparallel}^2\psi,\Delta\psi\right>\right\vert 
&\leq\frac{\mu}{54}\Vert \Delta\psi\Vert_{L^2}^2+\frac{C}{\mu}\Vert c_{\notparallel}^2\psi\Vert_{L^2}^2\\\nonumber
&\leq\frac{\mu}{54}\Vert \Delta\psi\Vert_{L^2}^2+C\Vert \Delta c_{\notparallel}\Vert_{L^2}^2\Vert c_{\notparallel}\Vert_{L^2}^2\Vert\psi\Vert_{L^2}^2.
\end{align*}
We next note that Jensen's inequality and Tonelli's theorem imply that
\begin{align*}
\left\Vert f_{\parallel} \right\Vert_{L^2}
&=|\T|\left(\int_{\TT}\left\vert\frac{1}{|\T|}\int_{\TT} f(x_1,x_2)dx_1\right\vert^2 d x_2\right)^{1/2}\\
&\leq |\T|\left(\int_{\TT}\frac{1}{|\T|}\int_{\TT} |f(x_1,x_2)|^2dx_2 \,dx_1\right)^{1/2}\\
&\leq \sqrt{|\T|}\Vert f(x_1,x_2)\Vert_{L^2}.
\end{align*}
Thus, parallel to the last three estimates on the last page, we also have:
\begin{align*}
&\left\vert\left<\int_{\TT}\left(c_{\notparallel}^2\partial_{x_2}c_{\parallel}\right)dx_1,\partial_{x_2}^2\psi\right>\right\vert\\
&\leq \Vert \partial_{x_2}^2\psi\Vert_{L^2}\Vert c_{\notparallel}^2\partial_{x_2}c_{\parallel}\Vert_{L^2}\\
&\leq\frac{\mu}{54}\Vert \Delta\psi\Vert_{L^2}^2+C\Vert\partial_{x_2} c_{\parallel}\Vert_{L^2}^3\Vert \Delta c_{\notparallel}\Vert_{L^2}\left(\Vert\phi\Vert_{L^2}+\Vert\psi\Vert_{L^2}\right)\Vert c_{\notparallel}\Vert_{L^2}\\
&\quad+C\Vert \Delta c_{\notparallel}\Vert_{L^2}^{2}\left(\Vert\phi\Vert_{L^2}^{2}+\Vert\psi\Vert_{L^2}^{2}\right)\Vert c_{\notparallel}\Vert_{L^2}^2,
\end{align*}
\begin{align*}
&\left\vert\left<\int_{\TT}\left(c_{\parallel}c_{\notparallel}\psi\right)dx_1,\partial_{x_2}^2\psi\right>\right\vert\\
&\leq \Vert\partial_{x_2}^2\psi\Vert_{L^2}\Vert c_{\parallel}c_{\notparallel}\psi\Vert_{L^2}\\
&\leq \frac{\mu}{108}\Vert \Delta\psi\Vert_{L^2}^2+C\Vert\partial_{x_2} c_{\parallel}\Vert_{L^2}^2\Vert\psi\Vert_{L^2}^2\Vert \Delta c_{\notparallel}\Vert_{L^2}\Vert c_{\notparallel}\Vert_{L^2},
\end{align*}
\begin{align*}
&\left\vert\left<\int_{\TT}\left(c_{\notparallel}^2\psi\right)dx_1,\partial_{x_2}^2\psi\right>\right\vert\\
&\leq \Vert \partial_{x_2}^2\psi\Vert_{L^2}\Vert c_{\notparallel}^2\psi \Vert_{L^2}\\
&\leq \frac{\mu}{54}\Vert \Delta\psi\Vert_{L^2}^2+C\Vert\Delta c_{\notparallel}\Vert_{L^2}^2\Vert c_{\notparallel}\Vert_{L^2}^2\Vert\psi\Vert_{L^2}^2.
\end{align*}
By combining all the estimates above, we finally obtain
\begin{align}
\frac{d}{dt}\|\psi\|_{L^2}^2&\leq C^\ast\nu\left(\Vert\Delta c_{\notparallel}\Vert_{L^2}^2\Vert c_{\notparallel}\Vert_{L^2}^2+\Vert\partial_{x_2} c_{\parallel}\Vert_{L^2}^2\Vert\Delta c_{\notparallel}\Vert_{L^2}\Vert c_{\notparallel}\Vert_{L^2}\right)\Vert\psi\Vert_{L^2}^2 \nonumber \\
\nonumber
&+\left(\Vert\partial_{x_2}v\Vert_{L^{\infty}}\Vert\phi\Vert_{L^2}+C^\ast\nu\Vert\partial_{x_2} c_{\parallel}\Vert_{L^2}^3\Vert\Delta c_{\notparallel}\Vert_{L^2}\Vert c_{\notparallel}\Vert_{L^2}\right)\Vert\psi\Vert_{L^2}\\\nonumber
&+C^\ast\nu\Big(\Vert \Delta c_{\notparallel}\Vert_{L^2}^2+\Vert\partial_{x_2} c_{\parallel}\Vert_{L^2}^4\Vert \Delta c_{\notparallel}\Vert_{L^2}\Vert c_{\notparallel}\Vert_{L^2}\\
&\qquad\qquad+\Vert\partial_{x_2} c_{\parallel}\Vert_{L^2}^3\Vert\Delta c_{\notparallel}\Vert_{L^2}\Vert c_{\notparallel}\Vert_{L^2}\Vert\phi\Vert_{L^2}\\
&\qquad\qquad+\Vert\Delta c_{\notparallel}\Vert_{L^2}^2\Vert c_{\notparallel}\Vert_{L^2}^2\Vert\phi\Vert_{L^2}^2\Big),
\label{eqn:psiL2esti}
\end{align}
where $C^\ast$ is a constant that only depends on $\mu$.

We will prove Theorem \ref{thm:main} by using a bootstrap argument. We start by making the following \emph{bootstrap  assumptions} based on estimates  \eqref{eqn: energy_estimateI}, \eqref{eqn: energy_estimateII},  \eqref{eqn:phiL2}, and \eqref{eqn:psiL2esti}. For any $\nu>0$ and $0 \le s \le t \le t_0$, where $t_0$ is to be determined later:
\begin{enumerate}[label={\bf (H\arabic*)}, ref=\textcolor{black}{\bf (H\arabic*)}] \label{l:bootstrap_a}
    \item $\|c_{\notparallel}(t)\|_{L^2} \le 20e^{-\frac{\lambda_\nu(t-s)}{4}} \|c_{\notparallel}(s)\|_{L^2}$; 
    \label{i:bootstrap_a.1}

    \medskip
    
    \item $\mu\nu\int_{s}^{t}\|\Delta c_{\notparallel}(\tau)\|_{L^2}^2 d\tau \le 10\|c_{\notparallel}(s)\|_{L^2}^2$; 
    \label{i:bootstrap_a.2}

    \medskip
    
    \item  $\|\phi(t)\|_{L^2} \le 20e^{-\frac{\lambda_\nu(t-s)}{4}} \|\phi(s)\|_{L^2}$; 
    \label{i:bootstrap_a.3}

    \medskip

    \item  $\|\psi(t)\|_{L^2}^2\leq Y_0\cdot \exp{\left[C_2\frac{10}{\mu}\Vert c_{\notparallel}(0)\Vert_{L^2}^4+2\right]}:=M_0$;
    \label{i:bootstrap_a.4}
\end{enumerate}
The constant $Y_0$ above can be explicitly computed as 
\begin{align}
\label{eqn:Y0}
    Y_0&=2\Big(\|\psi(0)\|_{L^2}^2+C_1\left(\Vert c_{\notparallel}(0)\Vert_{L^2}^2+4\right)\Vert c_{\notparallel}(0)\Vert_{L^2}^2\\\nonumber
    &\qquad+\frac{\left(20 \Vert\partial_{x_2}v\Vert_{L^{\infty}}+1\right)^2}{4} \Big),
\end{align}
where $C_1=160000(\sqrt{\frac{10}{\mu}}+1)^2C^\ast$, $C_2=20C^\ast$ are constants depending only on $\mu$, and we recall $v$ is the given shear profile. Since $c$ is a strong solution, by the well-posedness of the ACHE assumptions \ref{i:bootstrap_a.1}--\ref{i:bootstrap_a.4} hold on a, possibly small, positive time interval. We then define $t_0$ to be the largest time such that the bootstrap assumptions hold on $[0,t_0]$.
\begin{rem}
\label{rmk1}
We observe that \ref{i:bootstrap_a.3}--\ref{i:bootstrap_a.4} and assumption \eqref{eqn:smallinitial} together yields
\begin{align}
\label{eqn:Z0}
    \|\nabla c_{\notparallel}(t)\|_{L^2}^2
    &\leq 2\left(\|\phi(t)\|_{L^2}^2+\|\psi(t)\|_{L^2}^2\right)\\\nonumber
    &\leq 2(M_0+400\|\partial_{x_1}c_{\notparallel}(0)\|_{L^2}^2)\\\nonumber
    &\leq 2(M_0+400):=Z_0,
\end{align}
for any $0<t\leq t_0$. 
It is important to note that the value of $Y_0$, $M_0$ and $Z_0$ are all independent of $\|\partial_{x_1}c_{\notparallel}(0)\|_{L^2}$, assuming that this quantity is sufficiently small.
\end{rem}

The rest of the paper is devoted to showing that under these assumptions, one can establish the same  estimates but with constants that are half the size, therefore if $t_0$ is defined to be the largest time on which the bootstrap assumptions hold, then $t_0$ can be extended by a fixed amount. Repeating this argument leads to $t_0=\infty$.
To this end, we observe  that each term appearing in  \eqref{eqn: energy_estimateI}, \eqref{eqn: energy_estimateII}, \eqref{eqn:phiL2}, and \eqref{eqn:psiL2esti} are well controlled provided $\|\partial_{x_2} c_{\parallel}\|^2_{L^2}$ is bounded. Therefore, we devote the next section to proving that this quantity is indeed uniformly bounded. 

\section{Uniform bounds for $\|\partial_{x_2} c_{\parallel}\|_{L^2}$} 
\label{sec:Boundness of u_par}

In this section, we  show that  $\left\|\partial_{x_2} c_{\parallel}\right\|_{L^2(x_2)}$ can be bounded from above uniformly for all $t \in [0, t_0]$, under the bootstrap assumptions and provided $\nu$ is sufficiently small. We recall that $c_{\parallel}$ satisfies \eqref{eqn: eqpar} and is a function of $x_2$ only. Next, we observe  that $\int_{\T}\partial_{x_2}^2 \left(3c_{\parallel}^2 c_{\notparallel}\right)dx_1=\partial_{x_2}^2 \left(3c_{\parallel}^2\int_{\T}c_{\notparallel}dx_1\right)= 0$, since $c_{\notparallel}$ has zero mean in the $x_1$ direction. Hence  \eqref{eqn: eqpar} can be written equivalently as 
\begin{equation}
\label{eqn: u_par}
    \partial_t c_{\parallel}=\nu \left(\partial_{x_2}^2  \left(c_{\parallel}^3-c_{\parallel}-\mu\partial_{x_2}^2 c_{\parallel}\right) +\int_{\T}\partial_{x_2}^2\left(3c_{\parallel}c_{\notparallel}^2+c_{\notparallel}^3\right)dx_1 \right).
\end{equation}

\begin{lem}
\label{lem:uniform_parallel}
Under the bootstrap assumptions \ref{i:bootstrap_a.1}--\ref{i:bootstrap_a.4}, 
there exists a constant $B_0>0$ depending on $\|\partial_{x_2}(c_0)_{\parallel}\|_{L^2}$, $M_0$, and $\|(c_0)_{\notparallel}\|_{L^2}$ such that 
\begin{equation}
\label{eqn:upbddofupar}
  \left\|\partial_{x_2} c_{\parallel} (t) \right\|_{L^2}^2\leq B_0,
\end{equation}
for any $t\in[0,t_0]$.
\end{lem}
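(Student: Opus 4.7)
The plan is to control $\|\partial_{x_2} c_{\parallel}\|_{L^2}^2$ via the one-dimensional Cahn-Hilliard-type free energy
\begin{equation*}
E_{\parallel}(t) := \int_{\T}\left[\tfrac14(c_{\parallel}^2-1)^2 + \tfrac{\mu}{2}|\partial_{x_2} c_{\parallel}|^2\right] dx_2,
\end{equation*}
which satisfies the coercive lower bound $E_{\parallel}(t) \geq \tfrac{\mu}{2}\|\partial_{x_2} c_{\parallel}\|_{L^2}^2$. Since $c_{\parallel}$ inherits zero mean on $\T$ from $c$, Poincar\'e's inequality moreover gives $\|c_{\parallel}\|_{L^2} \le C\|\partial_{x_2} c_{\parallel}\|_{L^2}$, so a uniform bound for $E_{\parallel}$ in terms of the data listed in the statement will immediately deliver \eqref{eqn:upbddofupar}.

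First I would test equation \eqref{eqn: u_par} against the one-dimensional chemical potential $w := c_{\parallel}^3 - c_{\parallel} - \mu \partial_{x_2}^2 c_{\parallel}$. Since $\partial_t (\text{free-energy density}) = w\,\partial_t c_{\parallel}$ and $c_{\parallel}$ depends only on $x_2$, a standard integration by parts yields
\begin{equation*}
\frac{d E_{\parallel}}{dt} + \nu\|\partial_{x_2} w\|_{L^2}^2 = -\nu \int_{\T} \partial_{x_2} w\,\partial_{x_2} F_{\parallel}\, dx_2,\quad F_{\parallel}(x_2) := \int_{\T}\left(3 c_{\parallel} c_{\notparallel}^2 + c_{\notparallel}^3\right) dx_1,
\end{equation*}
and Young's inequality absorbs half of the hyperdissipation on the right to give $\tfrac{d}{dt} E_{\parallel} \le \tfrac{\nu}{2}\|\partial_{x_2} F_{\parallel}\|_{L^2}^2$.

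The next step is to bound $\|\partial_{x_2} F_{\parallel}\|_{L^2}$. After expanding $\partial_{x_2}$ and using Jensen's inequality (as the paper already does for the $\parallel$-projection), the same Gagliardo-Nirenberg toolkit employed throughout Section~\ref{sec:preliminary} — namely \eqref{1dGNineqn} for $\|c_{\parallel}\|_{L^{\infty}}$ and \eqref{eq:2DGagliardo} for $\|c_{\notparallel}\|_{L^{\infty}}$ — leads to a bound schematically of the form
\begin{equation*}
\|\partial_{x_2} F_{\parallel}\|_{L^2}^2 \le C\,\|\partial_{x_2} c_{\parallel}\|_{L^2}^{2}\,\|\Delta c_{\notparallel}\|_{L^2}\,\|c_{\notparallel}\|_{L^2}\bigl(\|\Delta c_{\notparallel}\|_{L^2}\|c_{\notparallel}\|_{L^2} + \|\psi\|_{L^2}^{2}\bigr) + C\,\|\Delta c_{\notparallel}\|_{L^2}^{2}\|c_{\notparallel}\|_{L^2}^{2}\|\psi\|_{L^2}^{2}.
\end{equation*}
After using $\|\partial_{x_2} c_{\parallel}\|_{L^2}^2 \le \tfrac{2}{\mu}E_{\parallel}$, this decomposes into a term linear in $E_{\parallel}$ with a time-dependent coefficient, plus a pure forcing term.

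Finally I would close the argument by Gronwall. The crucial observation is that, even though $\|\Delta c_{\notparallel}\|_{L^2}$ is not small, the prefactor $\nu$ combines with the bootstrap hypotheses to produce a $\nu$-independent estimate: \ref{i:bootstrap_a.2} yields $\nu\int_0^t \|\Delta c_{\notparallel}\|_{L^2}^2 d\tau \le \tfrac{10}{\mu}\|c_{\notparallel}(0)\|_{L^2}^2$, and \ref{i:bootstrap_a.1} provides the uniform bound $\|c_{\notparallel}(\tau)\|_{L^2}^2 \le 400\|c_{\notparallel}(0)\|_{L^2}^2$. Together these control the Gronwall exponent by $C\|c_{\notparallel}(0)\|_{L^2}^4/\mu^2$, while \ref{i:bootstrap_a.4} handles the $\|\psi\|_{L^2}^2$ factors and Cauchy-Schwarz in time (combined with the above two hypotheses) handles the mixed terms like $\nu\int \|\Delta c_{\notparallel}\|_{L^2}\|c_{\notparallel}\|_{L^2}\,d\tau$. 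The main obstacle is precisely the self-referential appearance of $\|\partial_{x_2} c_{\parallel}\|_{L^2}^2$ on the right, and the delicate point is that its coefficient is time-integrable \emph{only} because the hyperdiffusion constant $\mu\nu$ exactly cancels the $1/\nu$ inherent in the $L^2_t$-bound on $\|\Delta c_{\notparallel}\|_{L^2}$; the enhanced dissipation rate $\lambda_\nu$ itself is not used for this lemma.
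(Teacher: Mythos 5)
Your proposal is correct and follows essentially the same route as the paper: testing \eqref{eqn: u_par} against the chemical potential $c_{\parallel}^3-c_{\parallel}-\mu\partial_{x_2}^2 c_{\parallel}$, absorbing half the dissipation by Young, bounding the remaining forcing with Jensen and the Gagliardo--Nirenberg inequalities \eqref{1dGNineqn} and \eqref{eq:2DGagliardo}, and closing a Gr\"onwall estimate whose exponent and source are made $t$- and $\nu$-uniform via \ref{i:bootstrap_a.1}, \ref{i:bootstrap_a.2} and \ref{i:bootstrap_a.4}. The only cosmetic difference is that the paper interpolates $\|\nabla c_{\notparallel}\|_{L^2}^2\le\|\Delta c_{\notparallel}\|_{L^2}\|c_{\notparallel}\|_{L^2}$ in the term $\|c_{\parallel}c_{\notparallel}\partial_{x_2}c_{\notparallel}\|_{L^2}^2$ where you keep $\|\psi\|_{L^2}^2$, which is equally valid.
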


\begin{rem}
\label{rem:energy_issue}
To carry out the bootstrap argument, a global control on $c_{\parallel}$ is needed. However, due to the negative sign in front of the term $\partial_{x_2}^2 c_{\parallel}$ on the right-hand side of \eqref{eqn: u_par}, it seems difficult to obtain a 
$t$-independent upper bound for $\int_0^{t}\|\partial_{x_2}^2 c_{\parallel}\|_{L^2}^2 ds$ via integration by parts and an $L^2$ estimate. Utilizing the energy structure of the Cahn-Hilliard equation, we can, however, derive a global-in-time bound on $\|\partial_{x_2} c_{\parallel}(t)\|_{L^2}$ by showing that the forcing term in \eqref{eqn: u_par}, which contains 
$c_{\notparallel}$, is integrable on the time interval over which the \emph{bootstrap estimates} hold. As discussed in the end of last section, this result in turn implies that the \emph{bootstrap estimates} can be extended on a larger time interval and hence globally. Therefore, we conclude that $c_{\parallel}$ is asymptotically close to a one-dimensional solution.
\end{rem}

\begin{proof}
For notational ease, we denote the  energy $E[c_{\parallel}(t)]$ by $E_{\parallel}(t)$, with $E$ defined in \eqref{eqn: energy functional}. Then multiplying  both sides of \eqref{eqn: u_par} by $c_{\parallel}^3-c_{\parallel}-\mu\partial_{x_2}^2 c_{\parallel}$  and integrating over $\TT^2$ yields:
\begin{align}
\label{eqn:esti_Epar}
    &\frac{d E_{\parallel}(t)}{dt}
    =-\nu\int_{\T}\left\vert\partial_{x_2}\left(c_{\parallel}^3-c_{\parallel}-\mu\partial_{x_2}^2 c_{\parallel}\right)\right\vert^2 dx_2\\\nonumber
    &\qquad \qquad +\nu\left<\partial_{x_2}^2\int_{\T}3c_{\parallel} c_{\notparallel}^2+c_{\notparallel}^3 dx_1, c_{\parallel}^3-c_{\parallel}-\mu\partial_{x_2}^2 c_{\parallel}\right>\\\nonumber
    \leq-\frac{\nu}{2} & \int_{\T}\left\vert\partial_{x_2}\left(c_{\parallel}^3-c_{\parallel}-\mu\partial_{x_2}^2 c_{\parallel}\right)\right\vert^2 dx_2 +C\nu\left\Vert\partial_{x_2}\left(3c_{\parallel} c_{\notparallel}^2+c_{\notparallel}^3 \right) \right\Vert_{L^2}^2\\\nonumber
    \leq C\nu\big( & \left\Vert c_{\notparallel}\,c_{\parallel}\,\partial_{x_2} c_{\notparallel}\right\Vert_{L^2}^2+\|\un^2\partial_{x_2}\up\|_{L^2}^2+\left\Vert c_{\notparallel}^2\,\partial_{x_2} c_{\notparallel}\right\Vert_{L^2}^2\big),
\end{align}
where $\langle,\rangle$ denotes the inner product in $L^2(\TT^2)$. 
By using \eqref{eq:2DGagliardo} again, the terms on the right-hand side can be controlled as follows:
\begin{align*}
    \left\Vert c_{\notparallel}\,c_{\parallel}\,\partial_{x_2} c_{\notparallel}\right\Vert_{L^2}^2
    &\leq\left\Vert c_{\notparallel}\right\Vert_{L^\infty}^2 \left\Vert c_{\parallel}\right\Vert_{L^\infty}^2\left\Vert \partial_{x_2}c_{\notparallel}\right\Vert_{L^2}^2 \\
    &\leq C \left\Vert\Delta c_{\notparallel}\right\Vert_{L^2}\left\Vert c_{\notparallel}\right\Vert_{L^2}\left\Vert \partial_{x_2}c_{\parallel}\right\Vert_{L^2}^2\left\Vert\nabla c_{\notparallel}\right\Vert_{L^2}^2,\\
    &\leq C E_{\parallel}(t)\left\Vert\Delta c_{\notparallel}\right\Vert_{L^2}^2\left\Vert c_{\notparallel}\right\Vert_{L^2}^2,\\
     \left\Vert c_{\notparallel}^2\,\partial_{x_2} c_{\parallel}\right\Vert_{L^2}^2
     &\leq \left\Vert c_{\notparallel}\right\Vert_{L^\infty}^{4}\left\Vert \partial_{x_2} c_{\parallel}\right\Vert_{L^2}^2\\
     &\leq C \left\Vert\Delta c_{\notparallel}\right\Vert_{L^2}^{2}\left\Vert c_{\notparallel}\right\Vert_{L^2}^{2}E_{\parallel}(t) ,\\
    \left\Vert c_{\notparallel}^2\,\partial_{x_2} c_{\notparallel}\right\Vert_{L^2}^2
    &\leq \left\Vert c_{\notparallel}\right\Vert_{L^\infty}^{4}\left\Vert \partial_{x_2} c_{\notparallel}\right\Vert_{L^2}^2\\
    &\leq C\left\Vert\Delta c_{\notparallel}\right\Vert_{L^2}^{2}\left\Vert c_{\notparallel}\right\Vert_{L^2}^{2}\left\Vert \psi\right\Vert_{L^2}^2,
\end{align*}
where we have used the fact that $\|\partial_{x_2} c_{\parallel}\|_{L^2}^2\leq C E_{\parallel}(t)$. Thus, the evolution of $E_{\parallel}(t)$ is governed by the following ODE:
\begin{align}
\label{eqn:E_par_fistest}
    \frac{d E_{\parallel}(t)}{dt}
    &\leq C\nu \left(\left\Vert\Delta c_{\notparallel}\right\Vert_{L^2}^2\left\Vert c_{\notparallel}\right\Vert_{L^2}^{2}\right) E_{\parallel}(t)\\\nonumber
    &\quad+C\nu \left(\left\Vert\Delta c_{\notparallel}\right\Vert_{L^2}^{2}\left\Vert c_{\notparallel}\right\Vert_{L^2}^{2}\left\Vert \psi\right\Vert_{L^2}^2\right)\\\nonumber
    &:=b(t)E_{\parallel}(t)+a(t).
\end{align}
Since $a(t)$ and $b(t)$ are both positive, solving the differential inequality \eqref{eqn:E_par_fistest} gives
\begin{align}
\label{eqn:E_par_B1}
E_{\parallel}(t)
&\leq\left(E_{\parallel}(0)+\int_0^t e^{-\int_0^s b(\tau)d\tau} a(s)ds\right)\exp{\left(\int_0^t b(s) ds\right)}\\\nonumber
&\leq\left(E_{\parallel}(0)+\int_0^t a(s)ds\right)\exp{\left(\int_0^t b(s) ds\right)}.
\end{align}
Then, by using the bootstrap assumptions \ref{i:bootstrap_a.1}--\ref{i:bootstrap_a.4}, for any $t\in(0,t_0]$ we have on one hand that
\begin{align}
\label{eqn: int_a}
\int_0^t a(s)ds
&=C\nu \int_0^t \left\Vert\Delta c_{\notparallel}\right\Vert_{L^2}^{2}\left\Vert c_{\notparallel}\right\Vert_{L^2}^{2}\left\Vert \psi\right\Vert_{L^2}^2 ds\\\nonumber
&\leq CM_0\|c_{\notparallel}(0)\|_{L^2}^{2}\left(\nu\int_0^t \|\Delta c_{\notparallel}\|_{L^2}^{2}ds\right)\\\nonumber
&\leq CM_0\|c_{\notparallel}(0)\|_{L^2}^{4},
\end{align}
while on the other hand,
\begin{align}
\label{eqn: int_b}
\int_0^t b(s) &ds
=C\nu \int_0^t  \left\Vert\Delta c_{\notparallel}\right\Vert_{L^2}^2\left\Vert c_{\notparallel}\right\Vert_{L^2}^{2} ds\\\nonumber
&\leq C\left\Vert c_{\notparallel}(0)\right\Vert_{L^2}^{4}.
\end{align}

Substituting \eqref{eqn: int_a} and \eqref{eqn: int_b} into \eqref{eqn:E_par_B1}, we finally obtain that
\begin{align*}
E_{\parallel}(t)
&\leq\left(E_{\parallel}(0)+CM_0\|c_{\notparallel}(0)\|_{L^2}^{4}\right)\exp{\left( C\left\Vert c_{\notparallel}(0)\right\Vert_{L^2}^{4}\right)}:=B_0.
\end{align*}
To complete the proof, we note that $E_{\parallel}(0)\leq C\Vert c_{\parallel}(0)\Vert_{L^4}^4+\Vert\partial_{x_2}c_{\parallel}(0)\Vert_{L^2}^2\leq C(\Vert\partial_{x_2}c_{\parallel}(0)\Vert_{L^2}^2+1)^2$ by \eqref{1dGNineqn}.
\end{proof}


\section{Bootstrap estimates} 
\label{sec:Bootstrap estimates}

In this section, we  carry out the bootstrap steps and prove Theorem \ref{thm:main}. To do so, we
show that, under the same assumptions as in Theorem \ref{thm:main}, if   \ref{i:bootstrap_a.1}--\ref{i:bootstrap_a.4} hold on $\left[0, t_0\right]$, then for any $0\leq s\leq t \leq t_0$ the following \emph{bootstrap estimates} hold
\begin{enumerate}[label={\bf (B\arabic*)}, ref=\textcolor{black}{\bf (B\arabic*)}] \label{l:bootstrap_e}
    \item  $\|c_{\notparallel}(t)\|_{L^2} \le 10e^{-\frac{\lambda_\nu(t-s)}{4}} \|c_{\notparallel}(s)\|_{L^2}$; 
    \label{i:bootstrap_e.1}
    
    \medskip
    
    \item $\mu\nu\int_{s}^{t}\|\Delta c_{\notparallel}(\tau)\|_{L^2}^2 d\tau \le 5\|c_{\notparallel}(s)\|_{L^2}^2$; 
    \label{i:bootstrap_e.2}

    \medskip

    \item  $\|\phi(t)\|_{L^2} \le 10e^{-\frac{\lambda_\nu(t-s)}{4}} \|\phi(s)\|_{L^2}$; 
    \label{i:bootstrap_e.3}

    \medskip

    \item  $\|\psi(t)\|_{L^2}^2\leq \widetilde{Y_0}\,\exp{\left[C_2\frac{5}{\mu}\Vert c_{\notparallel}(0)\Vert_{L^2}^4+1\right]}$,
    \label{i:bootstrap_e.4}
\end{enumerate}
where 
\begin{align}
\label{eqn:Y02}
    \widetilde{Y_0}&=2\Big(\|\psi(0)\|_{L^2}^2+C_1\left(\Vert c_{\notparallel}(0)\Vert_{L^2}^2+2\right)\Vert c_{\notparallel}(0)\Vert_{L^2}^2\\\nonumber
    &\qquad+\frac{\left(10 \Vert\partial_{x_2}v\Vert_{L^{\infty}}+1\right)^2}{4} \Big)<Y_0,
\end{align}
with $C_1$, $C_2$ the same constants as in \ref{i:bootstrap_a.4}. Therefore, we conclude that \ref{i:bootstrap_e.1}--\ref{i:bootstrap_e.4} hold for all positive times, from which Theorem \ref{thm:main} readily follows.

We start by establishing \ref{i:bootstrap_e.2}.

\begin{lem}
\label{lem:B2}
Assume the bootstrap assumptions \ref{i:bootstrap_a.1}--\ref{i:bootstrap_a.4}.  There exists $\nu_{01}$, which only depends on $\Vert\partial_{x_2}v\Vert_{L^{\infty}},$ $\Vert\partial_{x_2}(c_0)_{\parallel}\Vert_{L^2},$ $\Vert (c_0)_{\notparallel}\Vert_{L^2},$ $ \Vert\partial_{x_2}(c_0)_{\notparallel}\Vert_{L^2},$ 
such that 
\begin{equation}
    \mu\nu\int_{s}^{t}\|\Delta c_{\notparallel}(\tau)\|_{L^2}^2 d\tau \le 5\|c_{\notparallel}(s)\|_{L^2}^2,
\end{equation}
for any $0\leq s\leq t\leq t_0$ and for any $\nu\leq \nu_{01}$.
In particular, \ref{i:bootstrap_e.2} holds.
\end{lem}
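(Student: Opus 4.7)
The plan is to start from the already-integrated energy estimate \eqref{eqn: energy_estimateII} and absorb every right-hand side term beyond the leading $\|c_{\notparallel}(s)\|_{L^2}^2$ into $4\|c_{\notparallel}(s)\|_{L^2}^2$ by choosing $\nu$ sufficiently small. The driving observation is that each of the four correction terms carries a factor of the form $C\nu\int_s^t\|c_{\notparallel}(\tau)\|_{L^2}^2\,d\tau$ multiplied by a quantity uniformly bounded on $[0,t_0]$ under the bootstrap assumptions. Applying \ref{i:bootstrap_a.1} to this time integral yields
$$\int_s^t \|c_{\notparallel}(\tau)\|_{L^2}^2\,d\tau \le 400\|c_{\notparallel}(s)\|_{L^2}^2 \int_s^t e^{-\lambda_\nu(\tau-s)/2}\,d\tau \le \frac{800}{\lambda_\nu}\|c_{\notparallel}(s)\|_{L^2}^2,$$
so every correction term inherits the small prefactor $\nu/\lambda_\nu = \nu^{1/(2m+1)}/\delta_0$, which tends to $0$ as $\nu\to 0$.

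Concretely, I would dispatch the four correction terms in turn. The first is handled directly by the display above. For the second, I would invoke Lemma \ref{lem:uniform_parallel} to bound $\|\partial_{x_2}c_{\parallel}(\tau)\|_{L^2}^4 \le B_0^2$ uniformly on $[0,t_0]$, pull this constant out of the integral, and apply the same time-integrated decay. For the third, I would combine that same $B_0^2$ with the uniform bound $\|c_{\notparallel}(\tau)\|_{L^2}^4 \le (20\|c_{\notparallel}(0)\|_{L^2})^4$ obtained from \ref{i:bootstrap_a.1} with $s=0$. For the fourth, I would use Remark \ref{rmk1} to bound $\|\nabla c_{\notparallel}(\tau)\|_{L^2}^4 \le Z_0^2$. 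The four contributions then sum to an expression of the shape $K\,\nu^{1/(2m+1)}\|c_{\notparallel}(s)\|_{L^2}^2$ with $K$ depending only on $\mu$, $\delta_0$, $B_0$, $Z_0$ and $\|c_{\notparallel}(0)\|_{L^2}$. Choosing $\nu_{01}$ small enough that $K\,\nu^{1/(2m+1)}\le 4$ for every $\nu\le\nu_{01}$ delivers \ref{i:bootstrap_e.2}.

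The main technical burden is really the bookkeeping of dependencies rather than any delicate estimate: I must verify that none of the constants hidden inside $B_0$, $M_0$, or $Z_0$ secretly depends on $\|\partial_{x_1}(c_0)_{\notparallel}\|_{L^2}$, since the whole point of the bootstrap is to build a threshold $\nu_{01}$ that is uniform in this quantity (which is controlled only through \eqref{eqn:smallinitial}). Remark \ref{rmk1} handles $Z_0$ explicitly, while Lemma \ref{lem:uniform_parallel} together with the definition of $Y_0$ must be inspected to confirm the same for $B_0$ and $M_0$. Once this is cleared, the admissible dependencies of $\nu_{01}$ on $\|\partial_{x_2}v\|_{L^\infty}$, $\|\partial_{x_2}(c_0)_{\parallel}\|_{L^2}$, $\|(c_0)_{\notparallel}\|_{L^2}$, $\|\partial_{x_2}(c_0)_{\notparallel}\|_{L^2}$ and $\mu$ follow from how these quantities enter $B_0$, $Z_0$ and the $L^2$ norm of the initial datum.
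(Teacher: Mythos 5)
Your proposal is correct and follows essentially the same route as the paper: both start from the integrated energy estimate \eqref{eqn: energy_estimateII}, use \ref{i:bootstrap_a.1} to convert $\int_s^t\|c_{\notparallel}\|_{L^2}^2\,d\tau$ into a $\tfrac{1}{\lambda_\nu}\|c_{\notparallel}(s)\|_{L^2}^2$ bound, invoke Lemma \ref{lem:uniform_parallel} and Remark \ref{rmk1} for the uniform bounds $B_0$ and $Z_0$, and then exploit $\nu/\lambda_\nu\to 0$ to absorb all correction terms into $4\|c_{\notparallel}(s)\|_{L^2}^2$. Your added care about the independence of $B_0$, $M_0$, $Z_0$ from $\|\partial_{x_1}(c_0)_{\notparallel}\|_{L^2}$ matches what the paper records in Remark \ref{rmk1} and before Lemma \ref{lem:B4}.
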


\begin{proof}
We combine the bootstrap assumptions \ref{i:bootstrap_a.1}--\ref{i:bootstrap_a.4}  with  Lemma \ref{lem:uniform_parallel} and the energy identity \eqref{eqn: energy_estimateII} to obtain:
\begin{align*}
 &\mu\nu\int_s^t\|\Delta c_{\notparallel}(\tau)\|_{L^2}^2 d\tau
    \\\nonumber
    &\leq \| c_{\notparallel}(s)\|_{L^2}^2 
    + C\nu\Big(\int_s^t \| c_{\notparallel}\|_{L^2}^2 + \|\partial_{x_2} c_{\parallel}\|_{L^2}^4\|c_{\notparallel}\|_{L^2}^{2} + \|\partial_{x_2} c_{\parallel}\|_{L^2}^4 \|c_{\notparallel}\|_{L^2}^6\\\nonumber
    &\qquad\qquad\qquad\qquad\qquad+\|\nabla c_{\notparallel}\|_{L^2}^4\|c_{\notparallel}\|_{L^2}^2 d\tau\Big)\\
    &\leq \| c_{\notparallel}(s)\|_{L^2}^2 + C\frac{\nu}{\lambda_{\nu}}\Big(1+B_0^2+B_0^2\|c_{\notparallel}(0)\|_{L^2}^4+Z_0^2\Big)\| c_{\notparallel}(s)\|_{L^2}^2,
\end{align*}
where $B_0$ is the constant defined in Lemma \ref{lem:uniform_parallel} and $Z_0$ was defined in \eqref{eqn:Z0}. Moreover, since $\tfrac{\nu}{\lambda_{\nu}}\rightarrow 0$ as $\nu\rightarrow 0$, one can choose $\nu_{01}$ small enough such that
\begin{equation*}
    \frac{\nu_{01}}{\lambda_{\nu_{01}}}\leq\frac{4}{C\left(1+B_0^2+B_0^2\|c_{\notparallel}(0)\|_{L^2}^4+Z_0^2\right)},
\end{equation*}
which completes the proof.
\end{proof}

Before proceeding to establishing \ref{i:bootstrap_e.1}, we show that  $\|c_{\notparallel}\|_{L^2}$ can be controlled from above in a short time interval. We first observe that in view of assumption 
\ref{i:bootstrap_a.1}, it is only needed to control the $L^2$-norm on times of order $4/\lambda_\nu$ on the interval $[0,t_0]$.

\begin{lem}
\label{lem:L2control}
Assume the bootstrap assumptions \ref{i:bootstrap_a.1}--\ref{i:bootstrap_a.4} and let \mbox{$\tau^*=\frac{4}{\lambda_{\nu}}$}. For any $0\leq t_1\leq t_0$, there exists $\nu_{02}$, depending on $\Vert\partial_{x_2}v\Vert_{L^{\infty}},\Vert\partial_{x_2}(c_0)_{\parallel}\Vert_{L^2},$ $ \Vert (c_0)_{\notparallel}\Vert_{L^2}, \Vert\partial_{x_2}(c_0)_{\notparallel}\Vert_{L^2}$ such that for any $\nu\leq\nu_{02}$,
\begin{equation*}
    \|c_{\notparallel}(t)\|_{L^2}^2\leq 2\|c_{\notparallel}(t_1)\|_{L^2}^2
\end{equation*}
for any $t\in[t_1,t_1+\tau^*]\cap[0,t_0]$.
\end{lem}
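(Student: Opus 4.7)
The goal is a short-time, Grönwall-type control on $\|c_{\notparallel}(t)\|_{L^2}^2$ over the window $[t_1,t_1+\tau^\ast]$ of length $\tau^\ast=4/\lambda_\nu$. The starting point is the energy estimate \eqref{eqn: energy_estimateI}, which I will use after discarding the non-negative dissipation term $\mu\nu\|\Delta c_{\notparallel}\|_{L^2}^2$ on the left-hand side. This reduces matters to bounding the coefficient
\[
\nu\Bigl(1+\|\partial_{x_2}c_{\parallel}\|_{L^2}^4+\|\partial_{x_2}c_{\parallel}\|_{L^2}^4\|c_{\notparallel}\|_{L^2}^4+\|\nabla c_{\notparallel}\|_{L^2}^4\Bigr)
\]
uniformly in $t\in[0,t_0]$ so that Grönwall's inequality produces a controlled exponential.

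\textbf{Step 1: uniform bounds on the right-hand side.} Using Lemma \ref{lem:uniform_parallel} I get $\|\partial_{x_2}c_{\parallel}(t)\|_{L^2}^2\le B_0$. The bootstrap assumption \ref{i:bootstrap_a.1} applied with $s=0$ gives $\|c_{\notparallel}(t)\|_{L^2}\le 20\|c_{\notparallel}(0)\|_{L^2}$, and Remark \ref{rmk1} yields $\|\nabla c_{\notparallel}(t)\|_{L^2}^2\le Z_0$. Combining these, there is a constant
\[
K:=C\bigl(1+B_0^2+B_0^{2}(20\|c_{\notparallel}(0)\|_{L^2})^{4}+Z_0^{2}\bigr),
\]
depending only on the data listed in the statement, such that \eqref{eqn: energy_estimateI} simplifies to
\[
\frac{d}{dt}\|c_{\notparallel}(t)\|_{L^2}^2\le \nu K\,\|c_{\notparallel}(t)\|_{L^2}^2
\qquad\text{for all }t\in[0,t_0].
\]

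\textbf{Step 2: Grönwall on the short window.} Integrating between $t_1$ and $t\in[t_1,t_1+\tau^\ast]\cap[0,t_0]$ gives
\[
\|c_{\notparallel}(t)\|_{L^2}^2\le \|c_{\notparallel}(t_1)\|_{L^2}^2\exp\!\bigl(\nu K(t-t_1)\bigr)\le \|c_{\notparallel}(t_1)\|_{L^2}^2\exp\!\left(\tfrac{4\nu K}{\lambda_\nu}\right).
\]

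\textbf{Step 3: choosing $\nu_{02}$.} Since $\lambda_\nu=\delta_0\nu^{2m/(2m+1)}$, we have $\nu/\lambda_\nu=\delta_0^{-1}\nu^{1/(2m+1)}\to 0$ as $\nu\to 0$. Hence I can pick $\nu_{02}$ small enough, depending only on $K$, $\delta_0$, and $m$ (and thus only on the norms listed in the statement), to ensure $4\nu K/\lambda_\nu\le \ln 2$ for all $\nu\le\nu_{02}$. This yields the desired $\|c_{\notparallel}(t)\|_{L^2}^2\le 2\|c_{\notparallel}(t_1)\|_{L^2}^2$.

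\textbf{Expected obstacle.} There is no serious analytical difficulty here: the proof is essentially a bookkeeping exercise that exploits the scale separation $\nu\ll\lambda_\nu$ built into the enhanced dissipation rate. The only subtle point is to verify that every quantity controlling $K$ is truly independent of $\|\partial_{x_1}(c_0)_{\notparallel}\|_{L^2}$, so that the smallness condition \eqref{eqn:smallinitial} is used only through its role in bounding $\|\phi(0)\|_{L^2}$ in \ref{i:bootstrap_a.3} and in the definition of $Z_0$; this is exactly what Remark \ref{rmk1} guarantees.
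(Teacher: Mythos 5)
Your proof is correct and follows essentially the same route as the paper: both start from \eqref{eqn: energy_estimateI}, discard the dissipation term, bound the coefficient uniformly via Lemma \ref{lem:uniform_parallel}, \ref{i:bootstrap_a.1} and Remark \ref{rmk1} to get $\frac{d}{dt}\|c_{\notparallel}\|_{L^2}^2\le \tilde C\nu\|c_{\notparallel}\|_{L^2}^2$, and then use Gr\"onwall together with $\nu/\lambda_\nu\to 0$ to choose $\nu_{02}$. The only cosmetic difference is that the paper absorbs the factor $20^4$ from \ref{i:bootstrap_a.1} into the generic constant, writing $\tilde C = C(1+B_0^2+B_0^2\|c_{\notparallel}(0)\|_{L^2}^4+Z_0^2)$.
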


\begin{proof}
By combining the bootstrap assumptions \ref{i:bootstrap_a.1}--\ref{i:bootstrap_a.4}, Lemma \ref{lem:uniform_parallel}, and the energy identity \eqref{eqn: energy_estimateI}, we obtain the following inequality on $[0,t_0]$:
\begin{align*}
    &\frac{d}{dt}\|c_{\notparallel}\|_{L^2}^2 \\
    &\leq 
    C\nu\left(\| c_{\notparallel}\|_{L^2}^2 + \|\partial_{x_2} c_{\parallel}\|_{L^2}^4\|c_{\notparallel}\|_{L^2}^{2} + \|\partial_{x_2} c_{\parallel}\|_{L^2}^4 \|c_{\notparallel}\|_{L^2}^6 + \|\nabla c_{\notparallel}\|_{L^2}^4\|c_{\notparallel}\|_{L^2}^2\right)\\
    &\leq C\nu\Big(\| c_{\notparallel}\|_{L^2}^2 + B_0^2 \|c_{\notparallel}\|_{L^2}^{2} + B_0^2\|c_{\notparallel}(0)\|_{L^2}^4\|c_{\notparallel}\|_{L^2}^2+Z_0^2\|c_{\notparallel}\|_{L^2}^2 \Big),
\end{align*}
with  $Z_0$ given  in \eqref{eqn:Z0}. Therefore, for $t\in [0,t_0]$
\begin{equation}
\label{eqn:ode_L2}
   \frac{d}{dt}\|c_{\notparallel}(t)\|_{L^2}^2 
    \leq \Tilde{C} \nu\|c_{\notparallel}(t)\|_{L^2}^2,
\end{equation}
where we have defined
\begin{equation*}
    \Tilde{C}=C(1+B_0^2+B_0^2\Vert c_{\notparallel}(0)\Vert^4_{L^2}+Z_0^2)
\end{equation*}
Consequently, for $t\in[t_1,t_1+\tau^*]\cap [0,t_0]$ we have
\begin{align*}
\Vert c_{\notparallel}(t)\Vert_{L^2}&\leq \Vert c_{\notparallel}(t_1)\Vert_{L^2}\exp(\tilde{C}\nu (t-t_1))\\
&\leq \Vert c_{\notparallel}(t_1)\Vert_{L^2}\exp(\tilde{C}\frac{4\nu}{\lambda_{\nu}}),
\end{align*}
using that   $\tau^*=\frac{4}{\lambda_{\nu}}$,
Then, again since  $\frac{\nu}{\lambda_{\nu}}\rightarrow 0$ as $\nu\rightarrow 0$, there exists $\nu_{02}$ depending only on 
$\Tilde{C}$, such that
\begin{equation*}
  \exp{\left(\tiny{\Tilde{C}}\frac{4\nu}{\lambda_{\nu}}\right)}\leq 2, 
\end{equation*}
for $\nu<\nu_{02}$.
In turn, this last estimate implies that
\begin{equation*}
  \|c_{\notparallel}(t)\|_{L^2}^2\leq\|c_{\notparallel}(t_1)\|_{L^2}^2\exp{\left(\tiny{\Tilde{C}}\frac{4\nu}{\lambda_{\nu}}\right)}
  \leq 2\|c_{\notparallel}(t_1)\|_{L^2}^2,
\end{equation*}
for any  $t\in[t_1,t_1+\tau^*]\cap[0,t_0]$. 
\end{proof}

Next, if $\tau^\ast$ is smaller than $t_0$,  the previous lemma is not sufficient to control  $\| c_{\notparallel}(t)\|_{L^2}$ on the interval $[0,t_0]$. In this case, however, we can use that the $L^2$ norm decays for times larger than $\tau^\ast$ by \ref{i:bootstrap_a.1}.

\begin{lem} \label{lem:preB1}
Assume the bootstrap assumptions \ref{i:bootstrap_a.1}--\ref{i:bootstrap_a.4},
and again let $\tau^*=4/\lambda_{\nu}$. There exists $\nu_{03}$, depending on $\Vert\partial_{x_2}v\Vert_{L^{\infty}},\Vert\partial_{x_2}(c_0)_{\parallel}\Vert_{L^2},$ $ \Vert (c_0)_{\notparallel}\Vert_{L^2}, \Vert\partial_{x_2}(c_0)_{\notparallel}\Vert_{L^2}$, such that, if $\tau^\ast<t_0$, then for any $0<\nu<\nu_{03}$ and for any $s\in[0,t_0-\tau^*]$, 
\begin{equation*}
   \| c_{\notparallel}(s+\tau^*)\|_{L^2}\leq \frac{1}{e}\| c_{\notparallel}(s)\|_{L^2}.
\end{equation*}
\end{lem}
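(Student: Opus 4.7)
The plan is to combine Duhamel's formula for \eqref{eqnotpar} with the enhanced dissipation estimate \eqref{20210512eq01} and the controls already established in the previous lemmas. I would rewrite \eqref{eqnotpar} in the form $\partial_t c_{\notparallel} + H_\nu c_{\notparallel} = N(c)$, where
\begin{equation*}
N(c) := -\nu\Delta c_{\notparallel} + \nu\Delta\bigl(3c_{\parallel}^2 c_{\notparallel}+3c_{\parallel} c_{\notparallel}^2 + c_{\notparallel}^3\bigr) - \nu\int_{\T}\partial_{x_2}^2\bigl(3c_{\parallel} c_{\notparallel}^2 + c_{\notparallel}^3\bigr)\,dx_1.
\end{equation*}
A direct check using $\int_{\T}c_{\notparallel}\,dx_1=0$ shows that $N(c)$ has zero mean in $x_1$, so the enhanced dissipation estimate \eqref{20210512eq01} applies to $e^{-tH_\nu}N(c)(\sigma)$ for every $\sigma$. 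The variation of constants formula at time $\tau^*=4/\lambda_\nu$ then yields
\begin{equation*}
\|c_{\notparallel}(s+\tau^*)\|_{L^2}\leq 5e^{-4}\|c_{\notparallel}(s)\|_{L^2} + 5\int_0^{\tau^*}e^{-\lambda_\nu(\tau^*-\sigma)}\|N(c)(s+\sigma)\|_{L^2}\,d\sigma.
\end{equation*}
Since $5e^{-4}<\tfrac{1}{2e}$, it is enough to bound the Duhamel integral by $\tfrac{1}{2e}\|c_{\notparallel}(s)\|_{L^2}$ once $\nu$ is small enough.

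To control $\|N(c)(s+\sigma)\|_{L^2}$, I would use the uniform bound $\|\partial_{x_2}c_{\parallel}\|_{L^2}^2\leq B_0$ from Lemma \ref{lem:uniform_parallel}, the uniform bound $\|\nabla c_{\notparallel}\|_{L^2}^2\leq Z_0$ from Remark \ref{rmk1}, and the short-time control $\|c_{\notparallel}(s+\sigma)\|_{L^2}\leq \sqrt{2}\|c_{\notparallel}(s)\|_{L^2}$ from Lemma \ref{lem:L2control} on the window $[s,s+\tau^*]$. The linear piece $\nu\Delta c_{\notparallel}$ is handled by Cauchy--Schwarz in time combined with Lemma \ref{lem:B2}:
\begin{equation*}
\int_0^{\tau^*}\nu\|\Delta c_{\notparallel}\|_{L^2}\,d\sigma\leq\sqrt{\nu\tau^*}\sqrt{\nu\int_0^{\tau^*}\|\Delta c_{\notparallel}\|_{L^2}^2\,d\sigma}\leq C\sqrt{\frac{\nu}{\mu\lambda_\nu}}\,\|c_{\notparallel}(s)\|_{L^2}.
\end{equation*}
For the cubic pieces $\nu\Delta(c_{\parallel}^2c_{\notparallel})$, $\nu\Delta(c_{\parallel}c_{\notparallel}^2)$, $\nu\Delta c_{\notparallel}^3$ and their subtracted $x_1$-means, I would expand via Leibniz and estimate each resulting multilinear term through the Gagliardo--Nirenberg inequalities \eqref{1dGNineqn}, \eqref{eqn:three_GN_ineq}, \eqref{eq:2DGagliardo}, exactly as in the derivation of \eqref{eqn: energy_estimateI} and \eqref{eqn:psiL2esti}, reducing every expression to a constant depending on $B_0$, $Z_0$, $\mu$ and $\|c_{\notparallel}(0)\|_{L^2}$ multiplied by a positive power of $\|\Delta c_{\notparallel}\|_{L^2}$. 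Each such time integral is then bounded either by a direct $L^1_t$ estimate over the window of length $\tau^*$, or by Cauchy--Schwarz against the $L^2_tH^2_x$ bound from Lemma \ref{lem:B2}; in either case one extracts an overall prefactor of the form $(\nu/\lambda_\nu)^\alpha$ for some $\alpha>0$.

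Since $\nu/\lambda_\nu = \nu^{1/(2m+1)}/\delta_0\to 0$ as $\nu\to 0$, I can choose $\nu_{03}$ depending only on $B_0$, $Z_0$, $\mu$ and the initial data norms listed in the statement so that the full Duhamel integral is at most $\|c_{\notparallel}(s)\|_{L^2}/(2e)$, which combined with the first-term bound produces the decay factor $1/e$ claimed in the lemma. The main obstacle is the bookkeeping for the cubic nonlinear terms: each Leibniz expansion yields several multilinear pieces, and for each one must verify both that the piece (after the appropriate mean-subtraction) lies in the $\notparallel$ subspace so that enhanced dissipation can be invoked on the Duhamel kernel, and that the Gagliardo--Nirenberg interpolation followed by time integration leaves a strictly positive power of $\nu/\lambda_\nu$ as the final prefactor.
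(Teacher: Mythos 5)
Your overall architecture --- peeling off the free evolution $e^{-\tau^* H_\nu}c_{\notparallel}(s)$, using $5e^{-4}<e^{-2}$, and showing the remainder is $O((\nu/\lambda_\nu)^\alpha)\|c_{\notparallel}(s)\|_{L^2}$ --- matches the paper's, but your treatment of the remainder via Duhamel has a genuine gap, and it is precisely the one the paper warns about. To run Duhamel you must bound $\|N(c)(s+\sigma)\|_{L^2}$ itself (at least in $L^1$ over the window), which forces you to distribute the Laplacian over the cubic terms. Expanding $\nu\Delta(3c_{\parallel}^2c_{\notparallel})$ and $\nu\Delta(3c_{\parallel}c_{\notparallel}^2)$ produces, among others, the pieces $c_{\parallel}\,(\partial_{x_2}^2c_{\parallel})\,c_{\notparallel}$ and $(\partial_{x_2}^2c_{\parallel})\,c_{\notparallel}^2$, whose $L^2$ norms require control of $\|\partial_{x_2}^2c_{\parallel}\|_{L^2}$. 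No such bound is available: the bootstrap assumptions and Lemma \ref{lem:uniform_parallel} only control $\|\partial_{x_2}c_{\parallel}\|_{L^2}$, and Remark \ref{rem:energy_issue} states explicitly that even the time-integrated quantity $\int_0^t\|\partial_{x_2}^2c_{\parallel}\|_{L^2}^2\,ds$ appears out of reach. (Pieces such as $(\partial_{x_2}c_{\parallel})^2c_{\notparallel}$ cause a similar, milder problem through $\|\partial_{x_2}c_{\parallel}\|_{L^4}$.) So the ``bookkeeping'' you defer is not mere bookkeeping: several of the multilinear terms cannot be closed with the estimates the paper has at its disposal.

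The paper sidesteps this by writing $c_{\notparallel}=c_1+c_2$, with $c_1$ solving the free equation from data $c_{\notparallel}(s)$ and $c_2$ solving the forced equation from zero data, and then estimating $c_2$ by an \emph{energy} estimate rather than through the semigroup: pairing the forcing against $c_2$ and integrating by parts as in \eqref{eqn:energy u2} moves the full Laplacian onto $c_2$, where $\|\Delta c_2\|_{L^2}^2$ is absorbed into the hyperdiffusive term on the left. The right-hand side then involves only $\|c_{\notparallel}\|_{L^2}$, $\|c_{\parallel}^2c_{\notparallel}\|_{L^2}$, $\|c_{\parallel}c_{\notparallel}^2\|_{L^2}$ and $\|c_{\notparallel}^3\|_{L^2}$ --- no second derivatives of $c_{\parallel}$ or $c_{\notparallel}$ at all --- and these are controlled by $B_0$, $Z_0$ and the bootstrap assumptions, yielding the $(\nu/\lambda_\nu)^{1/2}$ prefactor. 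If you insist on a semigroup formulation, you would need to integrate by parts inside the Duhamel integral, i.e., invoke a smoothing estimate for $e^{-tH_\nu}$ acting on divergence-form data; only the plain decay bound \eqref{20210512eq01} is available in the paper, so the energy-method route is the correct fix here.
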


\begin{proof}
For any $t\ge s$, we decompose $c_{\notparallel}(t)$ as 
\begin{equation}
\label{eqn:decomposition_u_notpar}
    c_{\notparallel}(t)=c_1(t)+c_2(t),
\end{equation}
where $c_1$ solves
\begin{equation*}
 \begin{cases} \partial_t c_1+v(x_2)\partial_{x_1} c_1+\mu\nu \Delta^2 c_1=0,\\
c_1(s)=c_{\notparallel}(s),
\end{cases}
\end{equation*}
and $c_2$ satisfies
\begin{equation}
\label{eqn:u2 equation}
 \begin{cases} 
  \partial_t c_2+v(x_2)\partial_{x_1} c_2+\mu\nu \Delta^2 c_2
   =-\nu\Delta c_{\notparallel} \\
    \quad +\nu\Delta\big(3 c_{\parallel}^2 c_{\notparallel} 
   +3c_{\parallel} c_{\notparallel}^2+c_{\notparallel}^3\big)-\nu\int_{\T}\partial_{x_2}^2\left(3c_{\parallel} c_{\notparallel}^2+c_{\notparallel}^3\right) dx_1,\\
   c_2(s)=0.
 \end{cases}
\end{equation}
Then, given the definition of $\tau^*$, the following bound holds for $c_1$:
\begin{equation}
\label{eq:c_1Bound}
   \|c_1(s+\tau^\ast)\|=\|e^{-\tau^\ast \,H_{\nu}}(c_{\notparallel}(s))\|_{L^2}\leq\frac{5}{e^4} \|c_{\notparallel}(s)\|_{L^2}\leq \frac{1}{e^2} \|c_{\notparallel}(s)\|_{L^2},
\end{equation}
since we defined $H_{\nu} = v(x_2)\partial_{x_1} +\mu\nu \Delta^2$.

Next, we perform a standard energy estimate on \eqref{eqn:u2 equation}:
 \begin{align}
    \partial_t\|c_2\|_{L^2}^2+2\mu\nu\|\Delta c_2\|_{L^2}^2
& =2\nu\left\langle \Delta  \left(-c_{\notparallel}+3 c_{\parallel}^2 c_{\notparallel}+3c_{\parallel} c_{\notparallel}^2+c_{\notparallel}^3\right), c_2\right\rangle \nonumber \\
 &\qquad -2\nu\left\langle \int_{\T}\partial_{x_2}^2\left(3c_{\parallel} c_{\notparallel}^2+c_{\notparallel}^3\right) dx_1, c_2\right\rangle, 
  \label{eqn:energy u2}
 \end{align}
 so that
\begin{align} \nonumber
&\partial_t\|c_2\|_{L^2}^2+2\mu\nu\|\Delta c_2\|_{L^2}^2 \leq C\nu \Big(\|c_{\notparallel}\|_{L^2}\|\Delta c_2\|_{L^2}+\| c_{\parallel}^2 c_{\notparallel}\|_{L^2}\|\Delta c_2\|_{L^2}+\\\nonumber
&\qquad\qquad+\| c_{\parallel} c_{\notparallel}^2\|_{L^2}\|\Delta c_2\|_{L^2}+\|c_{\notparallel}^3\|_{L^2}\|\Delta c_2\|_{L^2}\Big)\\\nonumber
&\leq \mu\nu\|\Delta c_2\|_{L^2}^2 +C\nu\left(\|c_{\notparallel}\|_{L^2}^2+\| c_{\parallel}^2 c_{\notparallel}\|_{L^2}^2+\| c_{\parallel} c_{\notparallel}^2\|_{L^2}^2 +\|c_{\notparallel}^3\|_{L^2}^2\right)\\\nonumber
&\leq \mu\nu\|\Delta c_2\|_{L^2}^2 +C\nu\left(\|c_{\notparallel}\|_{L^2}^2+\| c_{\parallel}\|^4_{L^{\infty}} \|c_{\notparallel}\|_{L^2}^2+\| c_{\parallel}\|_{L^{\infty}}^2 \|c_{\notparallel}\|_{L^4}^4 +\|c_{\notparallel}\|_{L^6}^6\right) \\ \nonumber
&\leq \mu\nu\|\Delta c_2\|_{L^2}^2 +C\nu\Big(\|c_{\notparallel}\|_{L^2}^2+\|\partial_{x_2} c_{\parallel}\|_{L^2}^4\|c_{\notparallel}\|_{L^2}^{2}\\ \nonumber
&\qquad\qquad\qquad\qquad\qquad+\|\partial_{x_2} c_{\parallel}\|_{L^2}^2\|\nabla c_{\notparallel}\|_{L^2}^{2}\|c_{\notparallel}\|_{L^2}^{2}+\|\nabla c_{\notparallel}\|_{L^2}^4\|c_{\notparallel}\|_{L^2}^2\Big)
\end{align}
where we used the Gagliardo–Nirenberg inequalities in \eqref{1dGNineqn} and \eqref{eqn:three_GN_ineq}. Integrating the above inequality over the interval $[s,s+\tau^\ast]$ gives:
\begin{align}
\label{eq:c_2Bound}
 &\|c_2(s+\tau^{*})\|_{L^2}^2\leq C\nu\Big(\int_s^{s+\tau^{*}} \|c_{\notparallel}\|_{L^2}^2+\|\partial_{x_2} c_{\parallel}\|_{L^2}^4\|c_{\notparallel}\|_{L^2}^{2} 
\\\nonumber
 &+\|\partial_{x_2} c_{\parallel}\|_{L^2}^2\|\nabla c_{\notparallel}\|_{L^2}^{2}\|c_{\notparallel}\|_{L^2}^{2}
+\|\nabla c_{\notparallel}\|_{L^2}^4\|c_{\notparallel}\|_{L^2}^2 d\tau\Big).
\end{align}
Hence,  \eqref{eq:c_1Bound} and \eqref{eq:c_2Bound} implies that
\begin{align}
&\|c_{\notparallel}(s+\tau^*)\|_{L^2} \leq \|e^{-\tau^\ast \,H_{\nu}}(c_{\notparallel}(s))\|_{L^2} \\\nonumber
&\quad+\Big(C\nu\int_s^{s+\tau^*} \|\partial_{x_2} c_{\parallel}\|_{L^2}^4\|c_{\notparallel}\|_{L^2}^{2}+\|\partial_{x_2} c_{\parallel}\|_{L^2}^2\|\nabla c_{\notparallel}\|_{L^2}^{2}\|c_{\notparallel}\|_{L^2}^{2}\\\nonumber
&\qquad\qquad\qquad\quad+\|\nabla c_{\notparallel}\|_{L^2}^4\|c_{\notparallel}\|_{L^2}^2+\|c_{\notparallel}\|_{L^2}^2 d\tau\Big)^{1/2}\\\nonumber
&\leq \frac{1}{e^2} \|c_{\notparallel}(s)\|_{L^2}+C\left(\frac{\nu}{\lambda_{\nu}}\right)^{1/2}\| c_{\notparallel}(s)\|_{L^2}\,\left(B_0+Z_0+1\right),
\end{align}
where we have used the bootstrap assumptions \ref{i:bootstrap_a.1}--\ref{i:bootstrap_a.4}  and Lemma \ref{lem:uniform_parallel}. Finally, exploiting again that $\nu/\lambda_\nu\to 0$ as $\nu\to 0$,  there exists $\nu_{03}$ small enough such that  for $\nu< \nu_{03}$,
\begin{align*}
    C\left(\frac{\nu}{\lambda_{\nu}}\right)^{1/2}<\frac{e^{-1}-e^{-2}}{B_0+Z_0+1}.
\end{align*}
 which  yields the desired inequality.
\end{proof}

\begin{rem}
Decomposing $c_{\notparallel}$ as in \eqref{eqn:decomposition_u_notpar} allows to better bound $\|c_{\notparallel}(s+\tau^*)\|_{L^2}$  than directly applying Duhamel's principle, for instance, as in \cite{coti2021global, FHX21B, FFIT19, FM22, iyer2021convection}. In particular, when the nonlinear term is in divergence form, the decomposition \eqref{eqn:decomposition_u_notpar} allows  to perform an integration by parts to reduce the order of derivatives on the right-hand side  as in \eqref{eqn:energy u2} . By comparison, in \cite{FHX21B} the authors considered an advective thin-film equation with forcing in  divergence form. By utilizing Duhamel's form, they proved that if the dissipation time  $\tau^{*}(v)$ (see Definition 3.6 in \cite{FHX21B}) satisfies a certain condition (Condition (3.9) in \cite{FHX21B}), then the flow can suppress finite-time blow up giving a global weak solution. If instead one decomposes the solution as in \eqref{eqn:decomposition_u_notpar} to redo the estimate of Lemma 3.14 in \cite{FHX21B},  Condition (3.9) can be substituted for by the weaker and more natural assumption $\tau^{*}(v)<\tilde{B}$, where $\tilde{B}>0$ is a threshold value. We should stress, as a matter of fact, that it is unclear whether an explicit time-independent flow exists that satisfies condition (3.9) in \cite{FHX21B}, while there exist flows satisfying the weaker condition, such as  certain cellular flows with  an appropriate amplitude and cell size so that $\tau^{*}(v)<\tilde{B}$ (see \cite{F2022dissipation, iyer2021convection, iyer2022quantifying}).
\end{rem}

Combining Lemma \ref{lem:L2control} and Lemma \ref{lem:preB1} allows finally to establish the bootstrap estimate \ref{i:bootstrap_e.1}.

\begin{lem}
\label{lem:B1}
Assume again the bootstrap assumptions \ref{i:bootstrap_a.1}--\ref{i:bootstrap_a.4}. Let $\widetilde{\nu_0}=\min\{\nu_{02}, \nu_{03}\}$, where $\nu_{02}$ and $\nu_{03}$ are as in Lemma \ref{lem:L2control} and  \ref{lem:preB1}, respectively. 
Then for any $0\leq s\leq t\leq t_0$ and  $\nu\leq\widetilde{\nu_0}$ ,
\begin{equation}
\|c_{\notparallel}(t)\|_{L^2} \le 10\, e^{-\frac{\lambda_\nu(t-s)}{4}} \,\|c_{\notparallel}(s)\|_{L^2}.
\end{equation}
In particular, \ref{i:bootstrap_e.1} holds.
\end{lem}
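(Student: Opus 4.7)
The plan is to prove the exponential decay estimate \ref{i:bootstrap_e.1} by iterating the one-step decay in Lemma \ref{lem:preB1} over intervals of length $\tau^\ast = 4/\lambda_\nu$, and using Lemma \ref{lem:L2control} to absorb the residual subinterval of length less than $\tau^\ast$. The factor of $10$ on the right-hand side of \ref{i:bootstrap_e.1} leaves a comfortable margin, which is what allows both of the two estimates from the previous lemmas to be stitched together.

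More precisely, fix $0 \le s \le t \le t_0$ and set $\nu \le \widetilde{\nu_0} = \min\{\nu_{02},\nu_{03}\}$. I would distinguish two cases. If $t-s < \tau^\ast$ (which in particular covers the degenerate case $\tau^\ast \ge t_0$, for which Lemma \ref{lem:preB1} is vacuous), Lemma \ref{lem:L2control} applied with $t_1 = s$ gives $\|c_{\notparallel}(t)\|_{L^2} \le \sqrt{2}\,\|c_{\notparallel}(s)\|_{L^2}$. Using the trivial bound $1 \le e \cdot e^{-(t-s)/\tau^\ast}$ on this short interval and the identity $\lambda_\nu \tau^\ast/4 = 1$, this yields
\begin{equation*}
\|c_{\notparallel}(t)\|_{L^2} \le \sqrt{2}\,e \cdot e^{-\lambda_\nu(t-s)/4}\,\|c_{\notparallel}(s)\|_{L^2},
\end{equation*}
and $\sqrt{2}\,e < 10$.

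In the complementary case $\tau^\ast < t_0$ and $t-s \ge \tau^\ast$, let $n = \lfloor (t-s)/\tau^\ast \rfloor \ge 1$, so that $s + k\tau^\ast \in [0, t_0 - \tau^\ast]$ for $k = 0, \ldots, n-1$ (since $s + n\tau^\ast \le t \le t_0$). I would then iterate Lemma \ref{lem:preB1} starting from $s$: each step contracts $\|c_{\notparallel}\|_{L^2}$ by $e^{-1}$, so after $n$ steps
\begin{equation*}
\|c_{\notparallel}(s + n\tau^\ast)\|_{L^2} \le e^{-n}\,\|c_{\notparallel}(s)\|_{L^2}.
\end{equation*}
The residual interval $[s + n\tau^\ast, t]$ has length strictly less than $\tau^\ast$, so Lemma \ref{lem:L2control} (with $t_1 = s + n\tau^\ast$) yields $\|c_{\notparallel}(t)\|_{L^2} \le \sqrt{2}\,\|c_{\notparallel}(s+n\tau^\ast)\|_{L^2}$. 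Combining these two estimates and using $n \ge (t-s)/\tau^\ast - 1$ gives
\begin{equation*}
\|c_{\notparallel}(t)\|_{L^2} \le \sqrt{2}\,e^{-n}\,\|c_{\notparallel}(s)\|_{L^2} \le \sqrt{2}\,e \cdot e^{-(t-s)/\tau^\ast}\,\|c_{\notparallel}(s)\|_{L^2} = \sqrt{2}\,e \cdot e^{-\lambda_\nu(t-s)/4}\,\|c_{\notparallel}(s)\|_{L^2},
\end{equation*}
and again $\sqrt{2}\,e < 10$, completing the bootstrap estimate \ref{i:bootstrap_e.1}.

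The proof is essentially routine once the correct bookkeeping between $\tau^\ast$ and $t_0$ is set up; I do not anticipate a substantive obstacle. The one technical point to verify carefully is that the nested intervals $[s + k\tau^\ast, s + (k+1)\tau^\ast]$ stay inside $[0,t_0]$ throughout the iteration, which follows immediately from $n\tau^\ast \le t - s \le t_0 - s$. The choice $\widetilde{\nu_0} = \min\{\nu_{02},\nu_{03}\}$ ensures that both Lemma \ref{lem:L2control} and Lemma \ref{lem:preB1} can be invoked simultaneously, so no additional smallness on $\nu$ is required at this stage.
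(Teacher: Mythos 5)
Your proof is correct and follows essentially the same route as the paper's: iterate the one-step contraction of Lemma \ref{lem:preB1} over intervals of length $\tau^\ast$ and use Lemma \ref{lem:L2control} on the leftover piece, absorbing the loss $\sqrt{2}\,e<10$. The only cosmetic difference is that you split cases on $t-s<\tau^\ast$ versus $t-s\ge\tau^\ast$ while the paper splits on $t_0<\tau^\ast$ versus $t_0\ge\tau^\ast$; both cover all configurations and the bookkeeping that keeps the iteration points in $[0,t_0-\tau^\ast]$ checks out.
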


\begin{proof}
Since $\widetilde{\nu_0}=\min\{\nu_{02}, \nu_{03}\}$, Lemma \ref{lem:L2control} and Lemma \ref{lem:preB1} hold. If $t_0<\tau^{*}$, \ref{i:bootstrap_e.1}  follows directly from Lemma \ref{lem:L2control} since 
\begin{equation*}
\|c_{\notparallel}(t)\|_{L^2}\leq \sqrt{2}\|c_{\notparallel}(s)\|_{L^2}\leq \frac{10}{e}\|c_{\notparallel}(s)\|_{L^2}\leq 10\, e^{-\frac{\lambda_\nu(t-s)}{4}} \,\|c_{\notparallel}(s)\|_{L^2}.
\end{equation*}
If $t_0\geq \tau^{*}$, Lemma \ref{lem:preB1} gives that for any $n\in\mathbb{N}$ with $s+n\tau^{*}\leq t_0$, 
\begin{equation*}
\|c_{\notparallel}(s+n\tau^{*})\|_{L^2}
\leq e^{-n} \|c_{\notparallel}(s)\|_{L^2}.
\end{equation*}
But, for any $0\leq s\leq t\leq t_0$, there exists $n\in \NN$ with the property that $t$ belongs to the interval $[s+n\tau^{*}, s+(n + 1)\tau^{*})$. Then Lemma \ref{lem:L2control} with $t_1 =s+n\tau^{*}$ gives that
\begin{align*}
    \|c_{\notparallel}(t)\|_{L^2}
    &\leq \sqrt{2}\|c_{\notparallel}(s+n\tau^{*})\|_{L^2}
    \leq \sqrt{2}e^{-n}\|c_{\notparallel}(s)\|_{L^2}\\
    &\leq \sqrt{2}e^{1-(t-s)/\tau^{*}}\|c_{\notparallel}(s)\|_{L^2}
    \leq 10 \, e^{-\frac{\lambda_\nu(t-s)}{4}}\, \|c_{\notparallel}(s)\|_{L^2},
\end{align*}
as  $t<s+(n+1)\tau^{*}$, hence $-n\leq 1-\frac{t-s}{\tau^*}$.
\end{proof}

Next, we turn to the proof of \ref{i:bootstrap_e.3}, which is similar to the proof of \ref{i:bootstrap_e.1}. We first prove the following lemma, corresponding to Lemma \ref{lem:L2control} for \ref{i:bootstrap_e.1}. We recall that  $\phi$ denotes $\partial_{x_1}c_{\notparallel}$ and satisfies equation \eqref{eqn:phieqn}.

\begin{lem}
\label{lem:phiL2control}
Assume the bootstrap assumptions \ref{i:bootstrap_a.1}--\ref{i:bootstrap_a.4}, and let $\tau^*=\frac{4}{\lambda_{\nu}}$. For any $0\leq t_1\leq t_0$, there exists $\nu_{04}$, depending on $\Vert\partial_{x_2}v\Vert_{L^{\infty}},\Vert\partial_{x_2}(c_0)_{\parallel}\Vert_{L^2},$ $ \Vert (c_0)_{\notparallel}\Vert_{L^2}, \Vert\partial_{x_2}(c_0)_{\notparallel}\Vert_{L^2}$, such that for any $\nu\leq\nu_{04}$ one has
\begin{equation*}
    \|\phi(t)\|_{L^2}^2\leq 2\|\phi(t_1)\|_{L^2}^2
\end{equation*}
for any $t\in[t_1,t_1+\tau^*]\cap[0,t_0]$. 
\end{lem}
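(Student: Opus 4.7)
The approach follows the blueprint of Lemma \ref{lem:L2control}, now applied to the differential inequality \eqref{eqn:phiL2} for $\phi$ in place of \eqref{eqn: energy_estimateI}. The plan is to reduce \eqref{eqn:phiL2} to an ODE of the form $\frac{d}{dt}\|\phi\|_{L^2}^2 \leq \nu\,G(t)\,\|\phi\|_{L^2}^2$, integrate via Grönwall over a window of length $\tau^\ast = 4/\lambda_\nu$, and then choose $\nu$ small enough that the accumulated exponent is at most $\log 2$.

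First I would invoke Lemma \ref{lem:uniform_parallel} to bound $\|\partial_{x_2} c_{\parallel}\|_{L^2}^2 \leq B_0$, the bootstrap assumption \ref{i:bootstrap_a.1} to control $\|c_{\notparallel}\|_{L^2}$ by $20\|(c_0)_{\notparallel}\|_{L^2}$, and Remark \ref{rmk1} (which combines \ref{i:bootstrap_a.3}--\ref{i:bootstrap_a.4} with \eqref{eqn:smallinitial}) to bound $\|\nabla c_{\notparallel}\|_{L^2}^2 \leq Z_0$. Substituting these into \eqref{eqn:phiL2} and integrating on $[t_1,t]$ for $t \in [t_1,t_1+\tau^\ast]\cap[0,t_0]$, Grönwall's inequality yields
\[
\|\phi(t)\|_{L^2}^2 \leq \|\phi(t_1)\|_{L^2}^2 \exp\!\left(C\nu \int_{t_1}^{t_1+\tau^\ast}\!\!\left(1+B_0^2+B_0\|\Delta c_{\notparallel}\|_{L^2}+\|\Delta c_{\notparallel}\|_{L^2}^{4/3}\right)\, d\tau\right),
\]
where the prefactor $C$ absorbs constants depending on $Z_0$, $\|(c_0)_{\notparallel}\|_{L^2}$ and $\mu$.

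It then remains to show that the exponent tends to zero as $\nu \to 0$. The constant-in-time part contributes $C(1+B_0^2)\nu\tau^\ast$. The linear-in-$\|\Delta c_{\notparallel}\|$ term is handled by Cauchy--Schwarz combined with \ref{i:bootstrap_a.2}, producing a contribution of order $(\nu\tau^\ast/\mu)^{1/2}$. The $4/3$-power term is handled by Hölder with exponents $(3/2,3)$ and again \ref{i:bootstrap_a.2}, producing a contribution of order $(\nu\tau^\ast/\mu)^{2/3}\cdot\nu^{1/3}$, equivalently of order $(\nu\tau^\ast)^{1/3}$ up to constants depending on the initial data. Since $\nu\tau^\ast = 4\nu/\lambda_\nu = 4\nu^{1/(2m+1)}/\delta_0 \to 0$ as $\nu\to 0$, all three contributions vanish, and one can choose $\nu_{04}$ small enough so that the whole exponent is at most $\log 2$. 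The main bookkeeping subtlety is the fractional-power Hölder estimate controlling $\|\Delta c_{\notparallel}\|^{4/3}$; I do not anticipate any further significant obstacle, since the argument is structurally identical to that of Lemma \ref{lem:L2control}, only with the somewhat richer right-hand side of \eqref{eqn:phiL2}.
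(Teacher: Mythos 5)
Your proposal is correct and follows essentially the same route as the paper: the paper likewise reduces \eqref{eqn:phiL2} to a Gr\"onwall inequality using Lemma \ref{lem:uniform_parallel} and Remark \ref{rmk1}, integrates the logarithm over the window of length $\tau^\ast$, and controls the $\|\Delta c_{\notparallel}\|_{L^2}$ and $\|\Delta c_{\notparallel}\|_{L^2}^{4/3}$ terms by Cauchy--Schwarz and H\"older together with \ref{i:bootstrap_a.1}--\ref{i:bootstrap_a.2}, concluding since $\nu/\lambda_\nu\to 0$. The only (immaterial) difference is that you absorb the $\|c_{\notparallel}\|_{L^2}$ factors into the constant via \ref{i:bootstrap_a.1} and pair $\|\Delta c_{\notparallel}\|_{L^2}^{4/3}$ with the constant function in H\"older, whereas the paper keeps those factors inside the time integrals.
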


\begin{proof}
By combining the bootstrap assumptions \ref{i:bootstrap_a.1}--\ref{i:bootstrap_a.4}, Lemma \ref{lem:uniform_parallel}, Remark \ref{rmk1}, and the energy inequality \eqref{eqn:phiL2}, we obtain:
\begin{align*}
\frac{d}{dt}\|\phi\|_{L^2}^2
&\leq C\nu\Big(1+\Vert \partial_{x_2} c_{\parallel}\Vert_{L^2}^4+\Vert \partial_{x_2} c_{\parallel}\Vert_{L^2}^2\Vert\Delta c_{\notparallel}\Vert_{L^2}\Vert c_{\notparallel}\Vert_{L^2}\\\nonumber
&\qquad\qquad+\Vert \Delta c_{\notparallel}\Vert_{L^2}^{4/3}\Vert \nabla c_{\notparallel}\Vert_{L^2}^{4/3}\Vert c_{\notparallel}\Vert_{L^2}^{4/3}\Big)\Vert\phi\Vert_{L^2}^2\\
&\leq C\nu\Big(1+B_0^2+B_0\Vert\Delta c_{\notparallel}\Vert_{L^2}\Vert c_{\notparallel}\Vert_{L^2}+Z_0^{2/3}\Vert\Delta c_{\notparallel}\Vert_{L^2}^{4/3}\Vert c_{\notparallel}\Vert_{L^2}^{4/3}\Big)\Vert\phi\Vert_{L^2}^2.
\end{align*}
Therefore, there exists a constant $\hat{C}$, depending only on $B_0$ and $Z_0$, such that
for $t\in [0,t_0]$,
\begin{equation}
   \frac{d}{dt}\|\phi(t)\|_{L^2}^2 
    \leq \hat{C} \nu\left(1+\Vert\Delta c_{\notparallel}\Vert_{L^2}\Vert c_{\notparallel}\Vert_{L^2}+\Vert\Delta c_{\notparallel}\Vert_{L^2}^{4/3}\Vert c_{\notparallel}\Vert_{L^2}^{4/3}\right)\Vert\phi\Vert_{L^2}^2.
\end{equation}
Consequently, for $t\in[t_1,t_1+\tau^*]\cap [0,t_0]$, where  again
$\tau^*=\frac{4}{\lambda_{\nu}}$, one has
\begin{align}
&\ln{\left(\|\phi(t)\|_{L^2}^2\right)} \leq\ln{\left(\|\phi(t_1)\|_{L^2}^2\right)}  +  
 \nonumber \\
 & \qquad \qquad \qquad \hat{C}\nu\int_{t_1}^{t}\left(1+\Vert\Delta c_{\notparallel}\Vert_{L^2}\Vert c_{\notparallel}\Vert_{L^2}+\Vert\Delta c_{\notparallel}\Vert_{L^2}^{4/3}\Vert c_{\notparallel}\Vert_{L^2}^{4/3}\right)ds \nonumber \\
&\leq\ln{\left(\|\phi(t_1)\|_{L^2}^2\right)}+\hat{C}\frac{\nu}{\lambda_{\nu}}+\hat{C}\left(\nu\int_{0}^{t}\Vert\Delta c_{\notparallel}\Vert_{L^2}^2 ds\right)^{1/2}\left(\nu\int_{0}^{t}\Vert c_{\notparallel}\Vert_{L^2}^2 ds\right)^{1/2}
\nonumber \\
&\qquad+\hat{C}\left(\nu\int_{0}^{t}\Vert\Delta c_{\notparallel}\Vert_{L^2}^2 ds\right)^{2/3}\left(\nu\int_{0}^{t}\Vert c_{\notparallel}\Vert_{L^2}^4 ds\right)^{1/3}
\nonumber \\
&\leq\ln{\left(\|\phi(t_1)\|_{L^2}^2\right)}+\hat{C}\left(\frac{\nu}{\lambda_{\nu}}+\left(\frac{\nu}{\lambda_{\nu}}\right)^{1/2}\Vert c_{\notparallel}(0)\Vert_{L^2}^2+\left(\frac{\nu}{\lambda_{\nu}}\right)^{1/3}\Vert c_{\notparallel}(0)\Vert_{L^2}^{8/3}\right), \nonumber
\end{align}
where we used \ref{i:bootstrap_a.1} and \ref{i:bootstrap_a.2} to establish  the last inequality. Above, $\hat{C}$ may change from line to line by an absolute constant. Then, again since $\frac{\nu}{\lambda_{\nu}}\rightarrow 0$ as $\nu\rightarrow 0$, there exists $\nu_{04}$, depending on $B_0$, $Z_0$, $\Vert (c_0)_{\notparallel}\Vert_{L^2}$, hence depending only on $\Vert\partial_{x_2}v\Vert_{L^{\infty}},\Vert\partial_{x_2}(c_0)_{\parallel}\Vert_{L^2},$ $ \Vert (c_0)_{\notparallel}\Vert_{L^2}, \Vert\partial_{x_2}(c_0)_{\notparallel}\Vert_{L^2}$, such that
\begin{equation*}
  \hat{C}\left(\frac{\nu}{\lambda_{\nu}}+\left(\frac{\nu}{\lambda_{\nu}}\right)^{1/2}\Vert c_{\notparallel}(0)\Vert_{L^2}^2+\left(\frac{\nu}{\lambda_{\nu}}\right)^{1/3}\Vert c_{\notparallel}(0)\Vert_{L^2}^{8/3}\right)\leq \ln{2}
\end{equation*}
for $\nu<\nu_{04}$. This estimate implies that
\begin{equation*}
  \|\phi(t)\|_{L^2}^2
  \leq 2\|\phi(t_1)\|_{L^2}^2,
\end{equation*}
for any  $t\in[t_1,t_1+\tau^*]\cap[0,t_0]$.
\end{proof}

Next,  similarly to Lemma \ref{lem:preB1}, we have the following estimate for $\phi$.

\begin{lem}
\label{lem:phipreB1}
Assume the bootstrap assumptions \ref{i:bootstrap_a.1}--\ref{i:bootstrap_a.4},
and let again $\tau^*=4/\lambda_{\nu}$. There exists $\nu_{05}$, depending on $\Vert\partial_{x_2}v\Vert_{L^{\infty}}$, $\Vert\partial_{x_2}(c_0)_{\parallel}\Vert_{L^2}$, $ \Vert (c_0)_{\notparallel}\Vert_{L^2}$, $\Vert\partial_{x_2}(c_0)_{\notparallel}\Vert_{L^2}$, such that for any $0<\nu<\nu_{05}$, if $\tau^\ast<t_0$, then for any $s\in[0,t_0-\tau^*]$, 
\begin{equation*}
   \| \phi(s+\tau^*)\|_{L^2}\leq \frac{1}{e}\| \phi(s)\|_{L^2}.
\end{equation*}
\end{lem}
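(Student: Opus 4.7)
The plan is to replicate closely the strategy of Lemma \ref{lem:preB1}, with $\phi=\partial_{x_1}c_{\notparallel}$ taking the role of $c_{\notparallel}$. The key preliminary observation is that $\phi_{\parallel}\equiv 0$, since $\int_{\T}\partial_{x_1}c_{\notparallel}\,dx_1=0$ on the torus, so $\phi$ lies entirely in the orthogonal complement of the kernel of the advection operator. Consequently, the enhanced-dissipation semigroup estimate \eqref{20210512eq01} applies directly to $\phi$, without any further projection step.

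I will decompose $\phi(t)=\phi_1(t)+\phi_2(t)$ for $t\geq s$, where $\phi_1$ solves the homogeneous linear problem $\partial_t\phi_1+v\partial_{x_1}\phi_1+\mu\nu\Delta^2\phi_1=0$ with $\phi_1(s)=\phi(s)$, and $\phi_2$ absorbs the entire right-hand side of \eqref{eqn:phieqn} with zero initial data at time $s$. The enhanced-dissipation bound immediately gives $\|\phi_1(s+\tau^*)\|_{L^2}\leq 5e^{-\lambda_\nu\tau^*}\|\phi(s)\|_{L^2}=5e^{-4}\|\phi(s)\|_{L^2}\leq e^{-2}\|\phi(s)\|_{L^2}$.

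For $\phi_2$, I will perform a standard $L^2$ energy estimate. Expanding by the chain rule $\partial_{x_1}(3c_{\parallel}^2c_{\notparallel}+3c_{\parallel}c_{\notparallel}^2+c_{\notparallel}^3)=(3c_{\parallel}^2+6c_{\parallel}c_{\notparallel}+3c_{\notparallel}^2)\phi$, and using that the forcing appears in bi-Laplacian form, two integrations by parts will place a $\Delta\phi_2$ on the right-hand side and allow Young's inequality to absorb $\mu\nu\|\Delta\phi_2\|_{L^2}^2$ into the left. The resulting differential inequality will be of the form $\tfrac{d}{dt}\|\phi_2\|_{L^2}^2\leq C\nu\,\Omega(t)\|\phi(t)\|_{L^2}^2$, where $\Omega(t)$ is polynomial in $\|c_{\parallel}\|_{L^{\infty}}$, $\|c_{\notparallel}\|_{L^{\infty}}$, $\|\Delta c_{\notparallel}\|_{L^2}$ and $\|c_{\notparallel}\|_{L^2}$, controlled via the Gagliardo--Nirenberg estimates \eqref{1dGNineqn} and \eqref{eq:2DGagliardo}, Lemma \ref{lem:uniform_parallel} (which bounds $\|c_{\parallel}\|_{L^{\infty}}\leq C\sqrt{B_0}$), and the bootstrap assumptions.

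Integrating over $[s,s+\tau^*]$, using \ref{i:bootstrap_a.3} to pull the factor $\|\phi(\tau)\|_{L^2}^2\leq 400\|\phi(s)\|_{L^2}^2$ out of the integral, and invoking \ref{i:bootstrap_a.1}--\ref{i:bootstrap_a.2} (with Cauchy--Schwarz to handle the cross term $\|\Delta c_{\notparallel}\|_{L^2}\|c_{\notparallel}\|_{L^2}$), every resulting term will carry a small factor of $\nu/\lambda_{\nu}$ or $(\nu/\lambda_{\nu})^{1/2}$. I therefore expect $\|\phi_2(s+\tau^*)\|_{L^2}\leq C(\nu/\lambda_{\nu})^{1/2}\|\phi(s)\|_{L^2}$ with $C$ depending only on $B_0$, $Z_0$, $\|c_{\notparallel}(0)\|_{L^2}$ and $\|\partial_{x_2}v\|_{L^{\infty}}$. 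Combining with the bound on $\phi_1$ and choosing $\nu_{05}$ so that $C(\nu/\lambda_{\nu})^{1/2}\leq e^{-1}-e^{-2}$ yields the claim. The main obstacle is pure bookkeeping: verifying that none of the cubic or trilinear contributions in the expansion produces terms that fail to close via \ref{i:bootstrap_a.3}; since every term after differentiating in $x_1$ carries exactly one factor of $\phi$, this closure is available, and the argument goes through essentially in parallel with Lemma \ref{lem:preB1}.
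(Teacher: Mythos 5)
Your proposal is correct and follows essentially the same route as the paper: the same decomposition $\phi=\phi_1+\phi_2$, the semigroup bound $5e^{-4}\le e^{-2}$ for $\phi_1$ (justified, as you note, because $\phi$ has zero $x_1$-average), and an energy estimate for $\phi_2$ that reuses the structure of \eqref{eqn:phiL2} together with the bootstrap assumptions and Lemma \ref{lem:uniform_parallel} to extract factors of powers of $\nu/\lambda_\nu$. The only cosmetic difference is that the paper controls $\|\phi(\tau)\|_{L^2}$ on $[s,s+\tau^*]$ via Lemma \ref{lem:phiL2control} rather than directly via \ref{i:bootstrap_a.3}, which changes only the absolute constant absorbed into the choice of $\nu_{05}$.
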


\begin{proof}
For any $t\ge s$, We decompose $\phi$ in the same manner as in \eqref{eqn:decomposition_u_notpar}:
\begin{equation}
 \phi(t)=\phi_1(t)+\phi_2(t)\,,
\end{equation}
with $\phi_1$ satisfying
\begin{equation*}
 \begin{cases} \partial_t \phi_1+v(x_2)\partial_{x_1} \phi_1+\mu\nu \Delta^2 \phi_1=0,\\
\phi_1(s)=\phi(s),
\end{cases}
\end{equation*}
and $\phi_2$ satisfying
\begin{equation*}
 \begin{cases} 
     \partial_t\phi_2+v(x_2)\partial_{x_1}\phi_2+\mu\nu\Delta^2\phi_2
    =-\nu\Delta\phi+\nu\Delta\left(\partial_{x_1}\left(3 c_{\parallel}^2 c_{\notparallel}+3 c_{\parallel}c_{\notparallel}^2+c_{\notparallel}^3\right)\right),\\
   \phi_2(s)=0.
 \end{cases}
\end{equation*}
Then,  recalling the definition of $\tau^*$, the following bound holds for $\phi_1$:
\begin{equation}
\label{eq:phi_1Bound}
   \|\phi_1(s+\tau^\ast)\|=\|e^{-\tau^\ast \,H_{\nu}}(\phi(s))\|_{L^2}\leq\frac{1}{e^2} \|\phi(s)\|_{L^2}.
\end{equation}
Next, it is straightforward to see that $\phi_2$ satisfies the same estimate as in \eqref{eqn:phiL2}. Integrating this estimate over the time interval $[s,s+\tau^\ast]$ gives:
\begin{align}
 &\|\phi_2(s+\tau^{*})\|_{L^2}^2\leq C\nu\int_s^{s+\tau^{*}} \Big(1+\Vert \partial_{x_2} c_{\parallel}\Vert_{L^2}^4+\Vert \partial_{x_2} c_{\parallel}\Vert_{L^2}^2\Vert\Delta c_{\notparallel}\Vert_{L^2}\Vert c_{\notparallel}\Vert_{L^2} \nonumber
\\
 &\qquad\qquad\qquad\qquad+\Vert \Delta c_{\notparallel}\Vert_{L^2}^{4/3}\Vert\nabla c_{\notparallel}\Vert_{L^2}^{4/3}\Vert c_{\notparallel}\Vert_{L^2}^{4/3}\Big)\Vert\phi\Vert_{L^2}^2 d\tau.
 \label{eq:phi_2Bound}
\end{align}
Combining \eqref{eq:phi_1Bound} and \eqref{eq:phi_2Bound}  implies that
\begin{align}
&\|\phi(s+\tau^*)\|_{L^2} 
\leq \|e^{-\tau^\ast \,H_{\nu}}(\phi(s))\|_{L^2} \\\nonumber
&\quad+\Big(C\nu\int_s^{s+\tau^{*}} \Big(1+\Vert \partial_{x_2} c_{\parallel}\Vert_{L^2}^4+\Vert \partial_{x_2} c_{\parallel}\Vert_{L^2}^2\Vert\Delta c_{\notparallel}\Vert_{L^2}\Vert c_{\notparallel}\Vert_{L^2}
\\\nonumber
 &\qquad\qquad\qquad\qquad+\Vert \Delta c_{\notparallel}\Vert_{L^2}^{4/3}\Vert\nabla c_{\notparallel}\Vert_{L^2}^{4/3}\Vert c_{\notparallel}\Vert_{L^2}^{4/3}\Big) \norm{\phi}_{L^2}d\tau\Big)^{1/2}\\\nonumber
&\leq \frac{1}{e^2} \|\phi(s)\|_{L^2}+C\Vert\phi(s)\Vert_{L^2}\Big(\nu\int_s^{s+\tau^{*}} \Big(1+B_0^2+B_0\Vert\Delta c_{\notparallel}\Vert_{L^2}\Vert c_{\notparallel}\Vert_{L^2}\\\nonumber
&\quad\quad\qquad\qquad\qquad\qquad\qquad+Z_0^{2/3}\Vert\Delta c_{\notparallel}\Vert_{L^2}^{4/3}\Vert c_{\notparallel}\Vert_{L^2}^{4/3}\Big)d\tau\Big)^{1/2}\\\nonumber
\end{align}
where we have used Lemma \ref{lem:phiL2control} in the last inequality. 
From the bootstrap assumptions \ref{i:bootstrap_a.1}--\ref{i:bootstrap_a.4}, Lemma \ref{lem:uniform_parallel}, and Lemma \ref{lem:phiL2control} it follows that
\begin{align}
&\|\phi(s+\tau^*)\|_{L^2} \leq \frac{1}{e^2} \|\phi(s)\|_{L^2}+C(1+B_0)\left(\frac{\nu}{\lambda_{\nu}}\right)^{1/2}\|\phi(s)\|_{L^2}\\\nonumber
&\quad+C B_0^{1/2}\left(\nu\int_0^{s+\tau^{*}}\Vert\Delta c_{\notparallel}\Vert_{L^2}\Vert c_{\notparallel}\Vert_{L^2}d\tau\right)^{1/2}\|\phi(s)\|_{L^2}\\\nonumber
&\quad+C Z_0^{1/3}\left(\nu\int_0^{s+\tau^{*}}\Vert\Delta c_{\notparallel}\Vert_{L^2}^{4/3}\Vert c_{\notparallel}\Vert_{L^2}^{4/3}d\tau\right)^{1/2}\|\phi(s)\|_{L^2}\\\nonumber
&\leq \frac{1}{e^2} \|\phi(s)\|_{L^2}+C\Big((1+B_0)\left(\frac{\nu}{\lambda_{\nu}}\right)^{1/2}+ B_0^{1/2}\Vert c_{\notparallel}(0)\Vert_{L^2}\left(\frac{\nu}{\lambda_{\nu}}\right)^{1/4}\\\nonumber
&\qquad\qquad\qquad\qquad\qquad+Z_0^{1/3}\Vert c_{\notparallel}(0)\Vert_{L^2}^{4/3}\left(\frac{\nu}{\lambda_{\nu}}\right)^{1/6}\Big)\|\phi(s)\|_{L^2},
\end{align}
Finally, utilizing again the fact that $\nu/\lambda_\nu\to 0$ as $\nu\to 0$, we conclude that there exists $\nu_{05}$, depending on $\Vert\partial_{x_2}v\Vert_{L^{\infty}}$, $\Vert\partial_{x_2}(c_0)_{\parallel}\Vert_{L^2}$, $ \Vert (c_0)_{\notparallel}\Vert_{L^2}$, $\Vert\partial_{x_2}(c_0)_{\notparallel}\Vert_{L^2}$, such that 
\begin{align*}
&C\Big((1+B_0)\left(\frac{\nu}{\lambda_{\nu}}\right)^{1/2}+ B_0^{1/2}\Vert c_{\notparallel}(0)\Vert_{L^2}\left(\frac{\nu}{\lambda_{\nu}}\right)^{1/4}\\\nonumber
&\qquad\qquad\qquad\qquad\qquad\qquad\qquad+Z_0^{1/3}\Vert c_{\notparallel}(0)\Vert_{L^2}^{4/3}\left(\frac{\nu}{\lambda_{\nu}}\right)^{1/6}\Big)<\frac{1}{e}-\frac{1}{e^2}
\end{align*}
for $\nu<\nu_{05}$.
\end{proof}

By using similar arguments as in the proof of Lemma \ref{lem:B1}, we can establish the validity of \ref{i:bootstrap_e.3} from Lemma \ref{lem:phiL2control} and Lemma \ref{lem:phipreB1}. We therefore omit the proof of the next lemma for brevity.

\begin{lem}
\label{lem:B3}
Assume again the bootstrap assumptions \ref{i:bootstrap_a.1}--\ref{i:bootstrap_a.4}. Let $\widehat{\nu_0}=\min\{\nu_{04},\nu_{05}\}$, where $\nu_{04}$ and $\nu_{05}$ are as in Lemma \ref{lem:phiL2control} and \ref{lem:phipreB1}, respectively.
Then for any $0\leq s\leq t\leq t_0$ and for any $\nu\leq\widehat{\nu_0}$,
\begin{equation}
\|\phi(t)\|_{L^2} \le 10\, e^{-\frac{\lambda_\nu(t-s)}{4}} \,\|\phi(s)\|_{L^2}.
\end{equation}
In particular, \ref{i:bootstrap_e.3} holds.
\end{lem}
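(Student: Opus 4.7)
The plan is to mimic directly the structure of the proof of Lemma \ref{lem:B1}, replacing $c_{\notparallel}$ throughout by $\phi$. Since we take $\widehat{\nu_0}=\min\{\nu_{04},\nu_{05}\}$, both Lemma \ref{lem:phiL2control} (a short-time bound $\|\phi(t)\|_{L^2}^2\leq 2\|\phi(t_1)\|_{L^2}^2$ on windows of length $\tau^\ast=4/\lambda_\nu$) and Lemma \ref{lem:phipreB1} (a factor $e^{-1}$ decay over a single window of length $\tau^\ast$) hold on $[0,t_0]$. These are precisely the two ingredients that drove the proof of Lemma \ref{lem:B1}, so only the packaging needs to be reproduced.

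First I would split into two cases according to the size of $t_0$. If $t_0<\tau^\ast$, then for any $0\leq s\leq t\leq t_0$ Lemma \ref{lem:phiL2control} applied with $t_1=s$ immediately gives
$$
\|\phi(t)\|_{L^2}\leq \sqrt{2}\,\|\phi(s)\|_{L^2}\leq \tfrac{10}{e}\|\phi(s)\|_{L^2}\leq 10\,e^{-\lambda_\nu(t-s)/4}\,\|\phi(s)\|_{L^2},
$$
where the last inequality uses $(t-s)/\tau^\ast\leq 1$ together with $\tau^\ast=4/\lambda_\nu$.

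If instead $t_0\geq\tau^\ast$, I would iterate Lemma \ref{lem:phipreB1}: for any integer $n\geq 0$ with $s+n\tau^\ast\leq t_0$,
$$
\|\phi(s+n\tau^\ast)\|_{L^2}\leq e^{-n}\,\|\phi(s)\|_{L^2}.
$$
Given any $t\in[s,t_0]$, pick the unique $n\in\mathbb{N}$ with $t\in[s+n\tau^\ast,s+(n+1)\tau^\ast)$ and apply Lemma \ref{lem:phiL2control} with $t_1=s+n\tau^\ast$ on the (possibly partial) window $[s+n\tau^\ast,t]\subseteq[s+n\tau^\ast,s+n\tau^\ast+\tau^\ast]\cap[0,t_0]$ to obtain
$$
\|\phi(t)\|_{L^2}\leq \sqrt{2}\,\|\phi(s+n\tau^\ast)\|_{L^2}\leq \sqrt{2}\,e^{-n}\,\|\phi(s)\|_{L^2}.
$$
Since $t<s+(n+1)\tau^\ast$ forces $-n\leq 1-(t-s)/\tau^\ast$, and $1/\tau^\ast=\lambda_\nu/4$, the right-hand side is at most $\sqrt{2}\,e^{1-\lambda_\nu(t-s)/4}\,\|\phi(s)\|_{L^2}\leq 10\,e^{-\lambda_\nu(t-s)/4}\,\|\phi(s)\|_{L^2}$, which is exactly \ref{i:bootstrap_e.3}.

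No genuine obstacle arises; the argument is a direct repackaging of the proof of Lemma \ref{lem:B1}. The only minor point to track is the edge case in which $s+(n+1)\tau^\ast$ exceeds $t_0$, but this is harmless because Lemma \ref{lem:phiL2control} is stated precisely for intersections of $[t_1,t_1+\tau^\ast]$ with $[0,t_0]$. All heavy lifting (controlling the nonlinear cubic terms via the bootstrap assumptions and Lemma \ref{lem:uniform_parallel}, and converting $\nu/\lambda_\nu\to 0$ into smallness of the relevant constants) was already done in Lemmas \ref{lem:phiL2control} and \ref{lem:phipreB1}, so nothing new is needed here.
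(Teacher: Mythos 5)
Your proposal is correct and is exactly the argument the paper intends: the paper omits the proof of this lemma, stating only that it follows from Lemma \ref{lem:phiL2control} and Lemma \ref{lem:phipreB1} by the same reasoning as in Lemma \ref{lem:B1}, which is precisely the repackaging you carried out. The case split on $t_0<\tau^\ast$ versus $t_0\ge\tau^\ast$, the iteration of the one-window decay, and the numerical checks $\sqrt{2}\le 10/e$ and $\sqrt{2}\,e\le 10$ all match.
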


Before we tackle \ref{i:bootstrap_e.4}, it is important to observe that in view of Remark \ref{rmk1}, Lemma \ref{lem:uniform_parallel},  Lemma \ref{lem:B2}, Lemma \ref{lem:B3}, the constants  $\widetilde{\nu_0}$ and $\widehat{\nu_0}$ introduced above are independent of $\Vert\partial_{x_1}\left(c_0\right)_{\notparallel}\Vert_{L^2}$. We are now ready to prove \ref{i:bootstrap_e.4} holds assuming the smallness condition \eqref{eqn:smallinitial}.

\begin{lem}
\label{lem:B4}
Assume again the bootstrap assumptions \ref{i:bootstrap_a.1}--\ref{i:bootstrap_a.4}. There exists $0<\nu_0<1$, depending on $\|\partial_{x_2}v\|_{L^\infty}$, $\|(c_0)_{\parallel}\|_{L^2}$, $\|\partial_{x_2}(c_0)_{\parallel}\|_{L^2}$, $\|(c_0)_{\notparallel}\|_{L^2}$, $\|\partial_{x_2}(c_0)_{\notparallel}\|_{L^2}$, such that, if for any $0<\nu\leq\nu_0$ the initial data $c_0$  satisfies
\begin{equation}
\label{eqn:smallinitial2}
    \|\partial_{x_1}\left(c_0\right)_{\notparallel}\|_{L^2}\leq \min{\left\{\lambda_{\nu},1\right\}},
\end{equation}
 then
\begin{equation}
\|\psi(t)\|_{L^2}^2\leq \widetilde{Y_0}\,\exp{\left[C_2\frac{5}{\mu}\Vert c_{\notparallel}(0)\Vert_{L^2}^4+1\right]}
\end{equation}
for any $t>0$, with $\widetilde{Y_0}$ defined in \eqref{eqn:Y02}. In particular, \ref{i:bootstrap_e.4} holds.
\end{lem}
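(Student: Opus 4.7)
The plan is to derive, from the energy inequality \eqref{eqn:psiL2esti}, a differential inequality of the schematic form $\frac{d}{dt}\|\psi\|_{L^2}^2 \leq \mathcal{A}(t)\,\|\psi\|_{L^2}^2 + \mathcal{B}(t)\,\|\psi\|_{L^2} + \mathcal{C}(t)$, with $\mathcal{A}, \mathcal{B}, \mathcal{C} \geq 0$ explicit in the relevant norms. The crucial observation is that every term on the right of \eqref{eqn:psiL2esti} carries a $\nu$-prefactor \emph{except} $\|\partial_{x_2}v\|_{L^\infty}\|\phi\|_{L^2}\|\psi\|_{L^2}$, whose coefficient must instead be rendered integrable in time via the exponential decay of $\|\phi\|_{L^2}$ established in \ref{i:bootstrap_e.3} together with the smallness assumption \eqref{eqn:smallinitial2}.

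To close a Gr\"onwall-type argument in the presence of the $\mathcal{B}\|\psi\|$ term, I would use the substitution $z(t) := e^{-\frac{1}{2}\int_0^t \mathcal{A}\,ds}\sqrt{\|\psi(t)\|_{L^2}^2 + 1}$. The $+1$ regularizes the $\sqrt{y}$-division that arises when differentiating $z^2$ and eventually produces the final $+1$ in the target exponent. A short computation yields $z'(t) \leq \tfrac{1}{2}e^{-\frac{1}{2}\int_0^t\mathcal{A}}(\mathcal{B}(t) + \mathcal{C}(t))$; integrating, squaring by $(a+b)^2 \leq 2(a^2+b^2)$, and undoing the substitution, I obtain
\begin{equation*}
\|\psi(t)\|_{L^2}^2 \leq 2\exp\!\Bigl(\int_0^t \mathcal{A}\,ds\Bigr)\Bigl(\|\psi(0)\|_{L^2}^2 + 1 + \bigl(\tfrac{1}{2}\!\textstyle\int_0^t(\mathcal{B}+\mathcal{C})\,ds\bigr)^{2}\Bigr),
\end{equation*}
which matches the structure of $\widetilde{Y_0}\, e^{C_2(5/\mu)\|c_{\notparallel}(0)\|_{L^2}^4 + 1}$ appearing in \ref{i:bootstrap_e.4}.

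The remaining step is to estimate $\int \mathcal{A}$, $\int\mathcal{B}$, $\int\mathcal{C}$ using \ref{i:bootstrap_e.1}--\ref{i:bootstrap_e.3} (Lemmas \ref{lem:B1}, \ref{lem:B2}, \ref{lem:B3}), Lemma \ref{lem:uniform_parallel}, Remark \ref{rmk1}, and the smallness \eqref{eqn:smallinitial2}. The dominant piece of $\int\mathcal{A}$ is $C^\ast\int\nu\|\Delta c_{\notparallel}\|^2\|c_{\notparallel}\|^2\,ds$, which via \ref{i:bootstrap_e.1} (to uniformly bound $\|c_{\notparallel}(s)\|^2$) and \ref{i:bootstrap_e.2} (to control $\nu\int\|\Delta c_{\notparallel}\|^2$) is at most $C_2(5/\mu)\|c_{\notparallel}(0)\|_{L^2}^4$, producing precisely the target exponent coefficient. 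The remaining $\nu$-weighted cross terms in $\int\mathcal{A}$, of form $C^\ast\nu\|\partial_{x_2}c_\parallel\|^2\|\Delta c_{\notparallel}\|\|c_{\notparallel}\|$, are controlled by Cauchy--Schwarz in time combined with Lemma \ref{lem:uniform_parallel} and yield $(\nu/\lambda_\nu)^{1/2}$-small remainders once $\nu_0$ is small. For $\int\mathcal{B}$, the decisive non-$\nu$-weighted piece $\|\partial_{x_2}v\|_{L^\infty}\int\|\phi\|\,ds$ is bounded by $40\|\partial_{x_2}v\|_{L^\infty}\|\phi(0)\|/\lambda_\nu$ via \ref{i:bootstrap_e.3} and then by a constant multiple of $\|\partial_{x_2}v\|_{L^\infty}$ via \eqref{eqn:smallinitial2}, producing after squaring the $(10\|\partial_{x_2}v\|_{L^\infty}+1)^{2}/4$ piece of $\widetilde{Y_0}$. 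Lastly, $\int\mathcal{C}$ is dominated by similar manipulations and accounts for the $C_1(\|c_{\notparallel}(0)\|^2+2)\|c_{\notparallel}(0)\|^2$ contribution.

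The main obstacle is the constant bookkeeping required for a \emph{strict} improvement over \ref{i:bootstrap_a.4}: at every step one must consistently invoke the already-proved \ref{i:bootstrap_e.1}--\ref{i:bootstrap_e.3}, with their halved constants $10$ and $5$, rather than the larger bootstrap assumptions \ref{i:bootstrap_a.1}--\ref{i:bootstrap_a.3}, so that the exponent reads $C_2(5/\mu)\|c_{\notparallel}(0)\|^4 + 1$ rather than $C_2(10/\mu)\|c_{\notparallel}(0)\|^4 + 2$ and $\widetilde{Y_0} < Y_0$. A second subtlety is that the smallness assumption \eqref{eqn:smallinitial2} is indispensable precisely to ensure the bound on $\int\|\phi\|_{L^2}\,ds$ is independent of $\|\partial_{x_1}(c_0)_{\notparallel}\|_{L^2}$, so that the final threshold $\nu_0$ depends only on the norms listed in the statement, as asserted.
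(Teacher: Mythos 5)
Your overall strategy coincides with the paper's: integrate \eqref{eqn:psiL2esti} into a Bihari--Gr\"onwall inequality carrying both $\|\psi\|_{L^2}^2$ and $\|\psi\|_{L^2}$ on the right, solve it, and control the coefficient integrals via the already-improved estimates \ref{i:bootstrap_e.1}--\ref{i:bootstrap_e.3}, Lemma \ref{lem:uniform_parallel}, the smallness of $\nu/\lambda_\nu$, and \eqref{eqn:smallinitial2}. (The paper invokes \cite[Theorem 21]{dragomir2003some} where you prove the Gr\"onwall step by hand via the substitution; that difference is immaterial.) You also correctly isolate the decisive point: the term $\|\partial_{x_2}v\|_{L^\infty}\|\phi\|_{L^2}\|\psi\|_{L^2}$ carries no factor of $\nu$ and is tamed only because $\int_0^\infty\|\phi\|_{L^2}\,ds\lesssim\|\phi(0)\|_{L^2}/\lambda_\nu$ is bounded by a constant thanks to \ref{i:bootstrap_e.3} and \eqref{eqn:smallinitial2}, which is also why $\nu_0$ is independent of $\|\partial_{x_1}(c_0)_{\notparallel}\|_{L^2}$.

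There is, however, one step that does not deliver the stated conclusion. Your closed-form bound places $\int\mathcal{C}$ \emph{inside the square} together with $\int\mathcal{B}$, giving a prefactor $2\bigl(\|\psi(0)\|_{L^2}^2+1+\tfrac14(\int\mathcal{B}+\int\mathcal{C})^2\bigr)$, and you then assert that $\int\mathcal{C}$ "accounts for" the term $C_1(\|c_{\notparallel}(0)\|_{L^2}^2+2)\|c_{\notparallel}(0)\|_{L^2}^2$ of $\widetilde{Y_0}$. It cannot: in $\widetilde{Y_0}$ that term appears additively (to the first power), whereas squaring produces $\tfrac14\bigl(C_1(\cdots)\bigr)^2$ plus a cross term with $10\|\partial_{x_2}v\|_{L^\infty}+1$, which strictly exceeds $\widetilde{Y_0}$ since $C_1=160000(\sqrt{10/\mu}+1)^2C^\ast$ is large. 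The bootstrap closes only if \ref{i:bootstrap_e.4} holds with exactly the $\widetilde{Y_0}$ of \eqref{eqn:Y02} (that is what makes the improved bound strictly smaller than the assumed $M_0$, which in turn calibrates $Z_0$ and every other lemma), so a larger prefactor breaks the continuity argument unless all constants are re-derived. The fix is what the paper does: first bound $\int_0^t\mathcal{C}\,ds\le C_1(\|c_{\notparallel}(0)\|_{L^2}^2+2)\|c_{\notparallel}(0)\|_{L^2}^2$ as in \eqref{eq:420} (using \ref{i:bootstrap_a.1}--\ref{i:bootstrap_a.3}, $\|\partial_{x_1}(c_0)_{\notparallel}\|_{L^2}\le1$, and \eqref{eqn:smallness01}), absorb it \emph{additively} into $y_0$, and only then run the Gr\"onwall argument on $y\le y_0+\int(py+qy^{1/2})$; your substitution works verbatim with $\sqrt{y+y_0}$ in place of $\sqrt{y+1}$. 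A related small slip: the $+1$ in the exponent $C_2\tfrac{5}{\mu}\|c_{\notparallel}(0)\|_{L^2}^4+1$ does not come from your regularization (which lands in the prefactor) but from bounding the subdominant piece $C^\ast\nu\int\|\partial_{x_2}c_{\parallel}\|_{L^2}^2\|\Delta c_{\notparallel}\|_{L^2}\|c_{\notparallel}\|_{L^2}$ of $\int\mathcal{A}$ by $1$ through the $(\nu/\lambda_\nu)^{1/2}$-smallness condition \eqref{eqn:smallness02}.
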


\begin{proof}
Integrating in time estimate\eqref{eqn:psiL2esti}  for $\|\psi\|_{L^2}^2$) yields:
\begin{align*}
&\|\psi(t)\|_{L^2}^2 \leq \|\psi(0)\|_{L^2}^2\\
&\quad+C^\ast\nu\int_0^t \left(\Vert\Delta c_{\notparallel}\Vert_{L^2}^2\Vert c_{\notparallel}\Vert_{L^2}^2+\Vert\partial_{x_2}c_{\parallel}\Vert_{L^2}^2\Vert\Delta c_{\notparallel}\Vert_{L^2}\Vert c_{\notparallel}\Vert_{L^2}\right)\Vert\psi\Vert_{L^2}^2ds\\
&\quad+\int_0^t\left(\Vert\partial_{x_2}v\Vert_{L^{\infty}}\Vert\phi\Vert_{L^2}+C^\ast\nu\Vert\partial_{x_2} c_{\parallel}\Vert_{L^2}^3\Vert\Delta c_{\notparallel}\Vert_{L^2}\Vert c_{\notparallel}\Vert_{L^2}\right)\Vert\psi\Vert_{L^2}ds\\
&\quad +C^\ast\nu\Big(\int_0^t\Vert \Delta c_{\notparallel}\Vert_{L^2}^2+\Vert\partial_{x_2} c_{\parallel}\Vert_{L^2}^4\Vert \Delta c_{\notparallel}\Vert_{L^2}\Vert c_{\notparallel}\Vert_{L^2}\\
&\qquad\qquad+\Vert\partial_{x_2} c_{\parallel}\Vert_{L^2}^3\Vert\Delta c_{\notparallel}\Vert_{L^2}\Vert c_{\notparallel}\Vert_{L^2}\Vert\phi\Vert_{L^2}+\Vert\Delta c_{\notparallel}\Vert_{L^2}^2\Vert c_{\notparallel}\Vert_{L^2}^2\Vert\phi\Vert_{L^2}^2ds\Big).
\end{align*}
To bound the last term on the right-hand side, we exploit the bootstrap assumptions  \ref{i:bootstrap_a.1}--\ref{i:bootstrap_a.3}, the  bound $\|\partial_{x_1}\left(c_0\right)_{\notparallel}\|_{L^2}\leq 1$,  and the fact we can choose $\nu_{06}$ small enough so  that for any $\nu<\nu_{06}$ we have
\begin{align}
\label{eqn:smallness01}
\left( B_0^2+B_0^{3/2}\right)\left(\frac{\nu}{\lambda_{\nu}}\right)^{1/2}\leq 1.
\end{align}
Then we have that

\begin{align}
&C^\ast\nu\Big(\int_0^t\Vert \Delta c_{\notparallel}\Vert_{L^2}^2+\Vert\partial_{x_2} c_{\parallel}\Vert_{L^2}^4\Vert \Delta c_{\notparallel}\Vert_{L^2}\Vert c_{\notparallel}\Vert_{L^2} \nonumber\\
\nonumber
&\qquad\qquad+\Vert\partial_{x_2} c_{\parallel}\Vert_{L^2}^3\Vert\Delta c_{\notparallel}\Vert_{L^2}\Vert c_{\notparallel}\Vert_{L^2}\Vert\phi\Vert_{L^2}+\Vert\Delta c_{\notparallel}\Vert_{L^2}^2\Vert c_{\notparallel}\Vert_{L^2}^2\Vert\phi\Vert_{L^2}^2ds\Big)\\\nonumber
&= C^\ast\nu \Big(\int_0^t (1+\Vert c_{\notparallel}\Vert_{L^2}^2\Vert \norm{\phi}_{L^2}^2) \Vert\Delta c_{\notparallel}\Vert_{L^2}^2\\\nonumber
&\qquad\qquad+(\Vert\partial_{x_2} c_{\parallel}\Vert_{L^2}^4 +\Vert\partial_{x_2} c_{\parallel}\Vert_{L^2}^3\Vert\phi\Vert_{L^2})\Vert \Delta c_{\notparallel}\Vert_{L^2}\Vert c_{\notparallel}\Vert_{L^2}ds\Big)\\\nonumber
&\leq C^\ast\left((20\Vert c_{\notparallel}(0)\Vert_{L^2})^2(20\Vert \partial_{x_1}c_{\notparallel}(0)\Vert_{L^2})^2+1\right)\left(\nu\int_0^t\Vert\Delta c_{\notparallel}\Vert_{L^2}^2ds\right) + \\\nonumber
&C^\ast \sqrt{\frac{\nu}{\lambda_{\nu}}} \left( B_0^2+20B_0^{3/2}\Vert \partial_{x_1}c_{\notparallel}(0)\Vert_{L^2}\right)\left(\nu\int_0^t\Vert\Delta c_{\notparallel}\Vert_{L^2}^2ds\right)^{1/2}
(20\Vert c_{\notparallel}(0)\Vert_{L^2})\\\nonumber
&\leq C_1\left[\left(\Vert c_{\notparallel}(0)\Vert_{L^2}^2+1\right)\Vert c_{\notparallel}(0)\Vert_{L^2}^2+\left( B_0^2+B_0^{3/2}\right)\left(\frac{\nu}{\lambda_{\nu}}\right)^{1/2}
\Vert c_{\notparallel}(0)\Vert_{L^2}^2\right] \nonumber \\
& \leq C_1\left(\Vert c_{\notparallel}(0)\Vert_{L^2}^2+2\right)\Vert 
c_{\notparallel}(0)\Vert_{L^2}^2, \label{eq:420}
\end{align}
where we have set  $C_1 = 160000 \,(\sqrt{\dfrac{10}{\mu}} +1)^2\, C^\ast$.
Thus,  $y(t):=\|\psi(t)\|_{L^2}^2$ satisfies:
\begin{equation}
y(t)\leq y_0 +\int_0^t p(s)y(s)+q(s)y^{1/2}(s)ds,
\end{equation}
where
\begin{align*}
&p(s)=C^\ast\nu\left(\Vert\Delta c_{\notparallel}\Vert_{L^2}^2\Vert c_{\notparallel}\Vert_{L^2}^2+\Vert\partial_{x_2} c_{\parallel}\Vert_{L^2}^2\Vert\Delta c_{\notparallel}\Vert_{L^2}\Vert c_{\notparallel}\Vert_{L^2}\right);\\
&q(s)=\left(\Vert\partial_{x_2}v\Vert_{L^{\infty}}\Vert\phi\Vert_{L^2}+C^\ast\nu\Vert\partial_{x_2} c_{\parallel}\Vert_{L^2}^3\Vert\Delta c_{\notparallel}\Vert_{L^2}\Vert c_{\notparallel}\Vert_{L^2}\right);\\
&y_0=\|\psi(0)\|_{L^2}^2+C_1\left(\Vert c_{\notparallel}(0)\Vert_{L^2}^2+2\right)\Vert c_{\notparallel}(0)\Vert_{L^2}^2.
\end{align*}
By applying the Gr\"onwall's type inequality in \cite[Theorem 21]{dragomir2003some}, we conclude that
\begin{equation} \label{eqn:yODE}
y(t)\leq \left\{y_0^{1/2}\exp{\left[\frac{1}{2}\int_0^t p(s)ds\right]}+\frac{1}{2}\int_0^t q(s)\exp{\left[\frac{1}{2}\int_s^t p(r)dr\right]}ds\right\}^{2}.
\end{equation}
Next, from \ref{i:bootstrap_a.1}, \ref{i:bootstrap_a.2}, $C_2=20C^\ast$, and Lemma \ref{lem:uniform_parallel} it follows that
\begin{align*}
\int_s^t p(r)dr&\leq \int_0^t p(s)ds \leq C_2\Vert c_{\notparallel}(0)\Vert_{L^2}^2\left(\nu \int_0^t\Vert\Delta c_{\notparallel}(s)\Vert_{L^2}^2ds\right)\\
&\qquad\qquad\qquad+C^\ast\,B_0\sqrt{\frac{10}{\mu}}\left(\frac{\nu}{\lambda_{\nu}}\right)^{1/2}\Vert c_{\notparallel}(0)\Vert_{L^2}^2.
\end{align*}
Choosing $\nu<\widehat{\nu_0}$, then by Lemma \ref{lem:B3} we know that \ref{i:bootstrap_e.3} holds, which yields
\begin{align}
\int_0^t \Vert\phi(s)\Vert_{L^2} ds\leq\frac{10}{\lambda_{\nu}}\Vert\phi(0)\Vert_{L^2},
\end{align}
one then has
\begin{align}
\label{eqn: integ_q}
\int_0^t q(s)ds&\leq \Vert\partial_{x_2}v\Vert_{L^{\infty}}\frac{10\Vert\partial_{x_1}c_{\notparallel}(0)\Vert_{L^2}}{\lambda_{\nu}}+ C^\ast B_0^{3/2}\sqrt{\frac{10}{\mu}}\left(\frac{\nu}{\lambda_{\nu}}\right)^{1/2}\Vert c_{\notparallel}(0)\Vert_{L^2}^2.
\end{align}
We now choose $\nu_{07}$  small enough such that 
\begin{align}
\label{eqn:smallness02}
 C^\ast\sqrt{\frac{10}{\mu}}\,\left(B_0+B_0^{3/2}\right)\left(\frac{\nu}{\lambda_{\nu}}\right)^{1/2}\Vert c_{\notparallel}(0)\Vert_{L^2}^2<1   
\end{align}
for any $\nu<\nu_{07}$. Letting
\begin{align}   
\label{eqn:nu0}
\nu_0=\min\{\nu_{01},...,\nu_{07}\},
\end{align}
we can ensure that \ref{i:bootstrap_e.1}--\ref{i:bootstrap_e.3}, inequalities \eqref{eqn:smallness01} and \eqref{eqn:smallness02} all  hold for $\nu<\nu_0$. Thus, if $c_0$ satisfies \eqref{eqn:smallinitial2}, we also have
\begin{align*}
\int_s^t p(r)dr&\leq \int_0^t p(r)dr \leq C_2\frac{5}{\mu}\Vert c_{\notparallel}(0)\Vert_{L^2}^4+1; \\
\int_0^t q(s)ds&
\leq 10 \Vert\partial_{x_2}v\Vert_{L^{\infty}}+1.
\end{align*}
Now the desired bound follows \eqref{eqn:yODE}:
\begin{align*}
&y(t) \leq \left(y_0^{1/2}+\frac{1}{2}\int_0^t q(s)ds\right)^2\exp{\left[\int_0^t p(s)ds\right]}\\
&\leq 2\left(y_0+\frac{1}{4}\left(\int_0^t q(s)ds\right)^2\right)\exp{\left[\int_0^t p(s)ds\right]}\\
&\leq 2\left(\|\psi(0)\|_{L^2}^2+C_1\left(\Vert c_{\notparallel}(0)\Vert_{L^2}^2+2\right)\Vert c_{\notparallel}(0)\Vert_{L^2}^2+\frac{\left(10\Vert\partial_{x_2}v\Vert_{L^{\infty}}+1\right)^2}{4} \right)\\
&\quad\cdot\exp{\left[C_2\frac{5}{\mu}\Vert c_{\notparallel}(0)\Vert_{L^2}^4+1\right]},
\end{align*}
which completes the proof.
\end{proof}

\begin{rem}
    The proof of the lemma above  shows that 
     the smallness assumption in Theorem \ref{thm:main} is necessary from a technical point of view. As a matter of fact, any bootstrap argument involving only $L^2$ estimates cannot control the cubic nonlinearity. Estimates containing first-order derivatives are needed. Differentiating \eqref{eqn: sheareq} in $x_2$ gives rise a stretching term that can only be handled via the exponential decay of $\|\partial_{x_1}c_{\notparallel}\|_{L^2}$. 
     However, this decay is not sufficient to conclude the argument of the proof without the smallness assumption on $\partial_{x_1}(c_0)_{\notparallel}$ (see \eqref{eqn: integ_q}). Similar smallness conditions on the initial data are used to control
     related  nonlinear PDEs such as  the parabolic-parabolic Keller–Segel system \cite{he2018suppression,zeng2021suppression}.
\end{rem}

\bibliography{CHEShear}
\bibliographystyle{abbrv}

\end{document}